\theoremstyle{plain}
\newtheorem{theorem}{Theorem}[section]
\newtheorem{lemma}[theorem]{Lemma}
\newtheorem{proposition}[theorem]{Proposition}
\newtheorem{assumption}[theorem]{Assumption}
\theoremstyle{definition}
\newtheorem{definition}[theorem]{Definition}
\theoremstyle{remark}
\begin{document}


\title{Incremental Quasi-Newton Algorithms for Solving Nonconvex, Nonsmooth, Finite-Sum Optimization Problems}

\author{
\name{Gulcin Dinc Yalcin\textsuperscript{a}\thanks{CONTACT G. Dinc Yalcin. Email: gdinc@eskisehir.edu.tr ORCID ID: 0000-0001-7696-7507} and Frank E. Curtis\textsuperscript{b}\thanks{Frank E. Curtis ORCID ID: 0000-0001-7214-9187}}
\affil{\textsuperscript{a}Department of Industrial Engineering, Eskisehir Technical University, Eskisehir, 26555, Türkiye; \textsuperscript{a,b}Department of Industrial and Systems Engineering, Lehigh University, Bethlehem, PA 18015, USA}
}

\maketitle
\begin{abstract}
Algorithms for solving nonconvex, nonsmooth, finite-sum optimization problems are proposed and tested.  In particular, the algorithms are proposed and tested in the context of an optimization problem formulation arising in semi-supervised machine learning.  The common feature of all algorithms is that they employ an incremental quasi-Newton (IQN) strategy, specifically an incremental BFGS (IBFGS) strategy.  One applies an IBFGS strategy to the problem directly, whereas the others apply an IBFGS strategy to a difference-of-convex reformulation, smoothed approximation, or (strongly) convex local approximation.  Experiments show that all IBFGS approaches fare well in practice, and all outperform a state-of-the-art bundle method.
\end{abstract}
\begin{keywords}
nonconvex optimization, nonsmooth optimization, semi-supervised machine learning, incremental quasi-Newton methods, smoothing
\end{keywords}

\section*{2010 MATHEMATICS SUBJECT CLASSIFICATIONS}
49M37, 65K05, 90C26, 90C30, 90C53

\section{Introduction}\label{sec.introduction}

In this paper, we investigate the performance of algorithms for solving nonconvex, nonsmooth, finite-sum minimization problems that arise in areas such as statistical/machine learning.  In particular, given a set of $m \in \mathbb{N}$ functions that might be nonconvex and/or nonsmooth, say, $f_i : {\cal R} \to \mathbb{R}$ for all $i \in \{1,\dots,m\}$ (where $\cal R$ is some finite-dimensional real vector space), we are interested in algorithms designed to minimize the potentially nonconvex and/or nonsmooth objective $f : {\cal R} \to \mathbb{R}$ as in
\begin{equation}\label{problem}
  \min_{\omega \in \cal R}\ f(\omega),\ \ \text{where}\ \ f(\omega) := \tfrac{1}{m} \sum_{i=1}^m f_i(\omega).
\end{equation}
Specifically, to demonstrate the design and use of a suite of related algorithms, we focus on a particular binary classification problem arising in semi-supervised machine learning, which can be formulated as follows.  Suppose one is given a set of $p \in \mathbb{N}$ \emph{labeled} training samples $\{(x_i,y_i)\}_{i=1}^p$ with $x_i \in \mathbb{R}^n$ and $y_i \in \{-1,1\}$ for all $i \in \{1,\dots,p\}$ as well as a set of $q \in \mathbb{N}$ \emph{unlabeled} training samples $\{x_i\}_{p+1}^{p+q}$ with $x_i \in \mathbb{R}^n$ for all $i \in \{p+1,\dots,p+q\}$.  The \emph{transductive support vector machine} (TSVM) problem~\cite{bennett1998}, similar to the classical support vector machine (SVM) problem, aims to find a hyperplane that separates points in the positively labeled class from those in the negatively labeled class (labeled by $1$ and $-1$, respectively).  An unconstrained optimization formulation of the TSVM problem (see, e.g., \cite{AstFud2007,joachims1999}), can be written in terms of optimization variables $w \in \mathbb{R}^n$ and $b \in \mathbb{R}$ as the problem to minimize $F : \mathbb{R}^n \times \mathbb{R} \to \mathbb{R}$, where
\begin{equation}\label{semisupervised}
  F(w,b) = \tfrac{1}{2} \|w\|_2^2 + C_1 \sum_{i=1}^p \max\{0, 1 - y_i(w^Tx_i +b)\} + C_2 \sum_{i=p+1}^{p+q} \max\{0, 1 - |w^Tx_i+b|\},
\end{equation}
with $C_1 \in (0,\infty)$ and $C_2 \in (0,\infty)$ being user-defined parameters.  In this definition of~$F$, the first term is strongly convex, the second term is convex and nonsmooth, and the third term is nonconvex and nonsmooth; hence, $F$ is nonconvex and nonsmooth.

The algorithms that we propose and investigate are all built upon an incremental quasi-Newton (IQN) framework, specifically the incremental version of the Broyden-Fletcher-Goldfarb-Shanno (BFGS) \cite{broyden1970,fletcher1970,goldfarb1970,shanno1970} strategy known as IBFGS \cite{Mokhtari2018}.  As discussed in Section~\ref{Literature}, quasi-Newton methods (such as BFGS) are designed to mimic the behavior of Newton's method for optimization through only the use of first-order derivative information, whereas incremental quasi-Newton is designed to further exploit the finite-sum structure of an objective function.  Since our problem of interest, namely, \eqref{semisupervised}, is nonconvex and nonsmooth, the convergence guaranteees for the original IBFGS algorithm~\cite{Mokhtari2018} do not hold in our setting, although we still investigate the performance of the approach applied directly to the problem to see if it can still be efficient and effective in practice.  We also propose and investigate the performance of IBFGS strategies applied to solve a few reformulations and approximations of problem~\eqref{semisupervised}, namely, one that uses a difference-of-convex reformulation, one that uses a smoothed approximation, and others that use (strongly) convex local approximations.

\subsection{Contributions}

Our main goal is to answer the question: Can IQN (such as IBFGS) strategies be effective and efficient when solving the nonconvex and nonsmooth problem \eqref{semisupervised}, which would provide evidence that they should be considered viable options more generally when solving problems of the form \eqref{problem}?  Our answer is in the affirmative, as we show in numerical experiments that a straightforward application of an IBFGS method, as well as application of all of our proposed reformulations and approximations, are successful in our experiments, and all outperform a state-of-the-art bundle method.  Convergence guarantees for IBFGS in our nonconvex and nonsmooth setting are elusive, as they are elusive for numerous other algorithms for solving nonconvex and nonsmooth problems that are applied regularly in practice to great effect, and as they are elusive even for non-incremental quasi-Newton strategies in general in convex and nonsmooth settings.  Nevertheless, we show that our proposed strategies, which involve either applying an IBFGS strategy directly or to a mollified reformulation/approximation, are effective when solving challenging problems motivated by an important real-world application.

\subsection{Literature Review}\label{Literature}

Steepest descent methods (also known as gradient methods) for optimization require only first-order derivative information and can achieve a local linear rate of convergence in favorable settings.  Newton methods, on the other hand, use second-order derivative information to obtain a faster rate of local convergence---namely, at least quadratic in favorable cases---but this comes at the cost of additional computation to solve a large-scale linear system in each iteration.  Quasi-Newton methods offer a balance between per-iteration computational cost and convergence rate by approximating second-order derivatives using only first-order derivative information.  The BFGS method is one of the most popular quasi-Newton methods in practice \cite{Nocedal2006}.  Like other quasi-Newton strategies, BFGS uses differences of gradients at consecutive iterates to update iteratively an approximation of the (inverse) Hessian of the objective.  Convergence guarantees for the BFGS method have been proved for convex smooth (see e.g. \cite{byrd1987,dennis1974}) and nonconvex smooth (see e.g. \cite{li2001mod,li2001}) objectives.  BFGS methods have also been studied for the minimization of nonsmooth functions (see, e.g., \cite{curtis2015,curtis2020}), although convergence guarantees for nonsmooth minimization are very limited to certain convex problems (see \cite{lewis2013} for some example convergence guarantees for specific functions).

Steepest descent, Newton, and quasi-Newton methods have also been designed specifically for the case of minimizing objectives defined by finite sums, such as in \eqref{problem}.  For example, incremental gradient methods use the gradient of only a single component function in each iteration, cycling through the component functions using a prescribed order or strategy.  These methods can significantly outperform non-incremental methods, although in theory they may only achieve a sublinear rate of convergence~\cite{bertsekas2011}. The convergence properties of such methods for convex minimization have been investigated in various articles \cite{blatt2007,gurbuzbalaban2017,solodov1998,wai2020}.  For convex nonsmooth minimization, incremental subgradient methods have been analyzed in, e.g., \cite{iiduka2016,hu2019,kiwiel2004,nedic2001,ram2009}.

Beyond incremental (sub)gradient methods, \cite{gurbuzbalaban2015} presents an incremental Newton method for cases when the component functions are assumed to be strongly convex.  In \cite{gurbuzbalaban2019}, the convergence rate of incremental gradient and incremental Newton methods are studied under constant and diminishing step size rules.  For the same reasons as in the non-incremental setting, IQN methods have also been proposed for the incremental setting.  After all, applying a method such as BFGS when solving problem \eqref{problem} requires ${\cal O}(mn+n^2)$ operations per iteration, which can be large when $m$ and/or $n$ is large.  The IBFGS strategy proposed in \cite{Mokhtari2018} has a cost of only ${\cal O}(n^2)$ per iteration, and has convergence guarantees when the objective is convex and smooth.  It is this algorithm that we use as a basis for the methods proposed in this paper.

Various other problems of the form \eqref{problem}, some related to \eqref{semisupervised}, have been studied in the literature, as well as algorithms for solving them; see, e.g., \cite{joachims1999,chapelle2005,belkin2006,sindhwani2006,collobert2006,AstFud2007}.  We also refer the reader to review papers and book chapters, such as \cite{chapelle2008,van2020,zhou2014}.  This being said, to the best of our knowledge, there has not been prior work on the variants of the IBFGS strategy that are proposed and investigated in this paper.  We also mention in passing that stochastic (quasi-)Newton methods have been studied for solving machine learning problems as well \cite{byrd2011,byrd2016,qi2017,zhao2017}.

\subsection{Organization}

In Section~\ref{SSL}, we present reformulations and approximations of~\eqref{semisupervised} to which our proposed algorithms are to be applied.  In Section~\ref{IQN}, we provide background on quasi-Newton methods, BFGS, and IBFGS, then propose variants of IBFGS for solving the problems formulations that are presented in Section~\ref{SSL}.  Section~\ref{computational} provides the results of numerical experiments using our proposed algorithms and a state-of-the-art bundle method for the sake of comparison.  Concluding remarks are given in Section~\ref{conclusion}.

\section{Reformulations and Approximations}\label{SSL}

As mentioned, the semi-supervised machine learning objective function \eqref{semisupervised} involves three terms---one strongly convex, one convex and nonsmooth, and one nonconvex and nonsmooth---leading overall to a function that is nonconvex and nonsmooth.  Like other such functions that appear in the literature, however, this function has structure that can be exploited in the context of employing optimization algorithms.  In this section, we present a few potential reformulations and approximations of this function that could be useful in practice; indeed, our experiments confirm that they are useful.

\subsection{Difference-of-Convex (DC) Function Reformulation}\label{sec.DC}

If one is willing to accept the nonsmoothness of \eqref{semisupervised}, then the remaining obstacle is nonconvexity.  Fortunately, \eqref{semisupervised} has the property that it can be expressed as a special type of nonconvex function, namely, a difference-of-convex (DC) function, which is a type that is now well known to have properties that can be exploited in the context of optimization \cite{CuiPang2021}.  Formally, let us introduce the following definition.

\begin{definition} (see, e.g., \cite[Definition 2.1]{horst1999})
  \emph{A continuous function $f : \cal R \to \mathbb{R}$ is a difference-of-convex (DC) function if and only if there exist two convex functions, call them $g : \cal R \to \mathbb{R}$ and $h : \cal R \to \mathbb{R}$, such that $f = g - h$.}
\end{definition}

Since the first two terms in \eqref{semisupervised} are convex, to show that $F$ is a DC function, all that is needed is to express the third term as a DC function.  This can be done by recognizing that, for all $i \in \{p+1,\dots,p+q\}$, one can write
\begin{equation*}
  \begin{aligned}
    f_i(w,b) :=&\ \max\{0,1-|w^Tx_i + b|\} \\
    =&\ \max\{0,|w^T x_i+b|-1\} - (|w^T x_i+b|-1) =: g_i(w,b) - h_i(w,b).
  \end{aligned}
\end{equation*}
This decomposition is illustrated in Figure~\ref{fig:dc}.  Using this observation, a DC reformulation of \eqref{semisupervised} is given by $F^{DC} : \mathbb{R}^n \times \mathbb{R} \to \mathbb{R}$ defined by
\begin{multline}
  F^{DC}(w,b) = \tfrac{1}{2} \|w\|_2^2 + C_1 \sum_{i=1}^p \max\{0,1-y_i(w^T x_i + b)\} \\
  + C_2 \left( \sum_{i=p+1}^{p+q} \max\{|w^T x_i + b| - 1, 0\} - \sum_{i=p+1}^{p+q} (|w^T x_i+b|-1) \right).
\end{multline}

\begin{figure}[ht]
  \centering
  \includegraphics[width=0.33\textwidth]{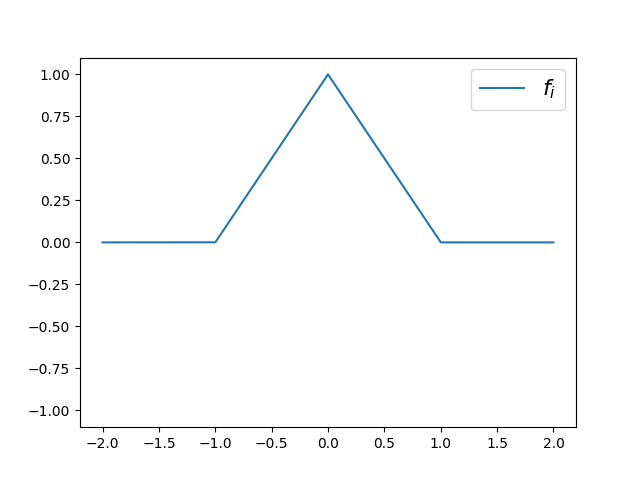}
  \includegraphics[width=0.33\textwidth]{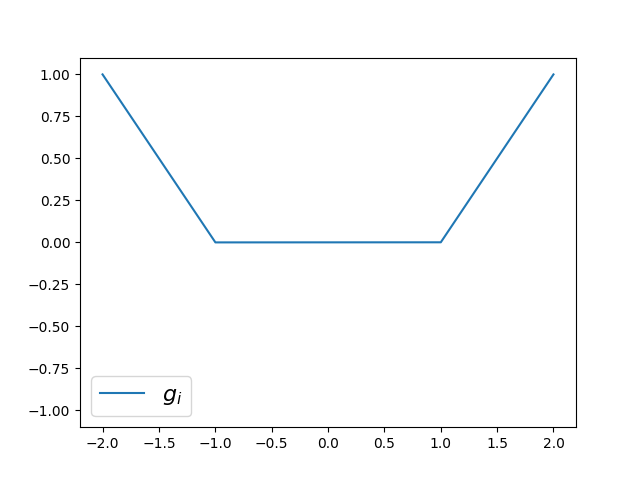}
  \includegraphics[width=0.33\textwidth]{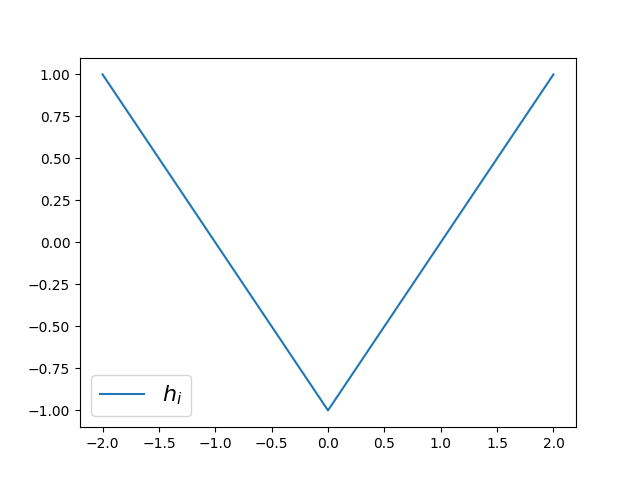}
  \caption{DC function formulation of $f_i(\cdot) = \max\{0,1-|\cdot|\}$ (on the left) as the difference between $g_i(\cdot) = \max\{0,|\cdot|-1\}$ (in the middle) and $h_i(\cdot) = |\cdot|-1$ (on the right).}
  \label{fig:dc}
\end{figure}

\subsection{Smooth Approximation}

With the idea of opening the door for the use of the numerous algorithms that have been designed for smooth optimization, one can consider avoiding the nonsmoothness of \eqref{semisupervised} by considering a smooth approximation.  In particular, one can consider attempting to minimize \eqref{semisupervised} by solving (approximately) a sequence of smooth approximations that tend to \eqref{semisupervised} in the limit.  For this purpose, let us introduce the following definition.

\begin{definition}(see, e.g., \cite[Definition 1]{chen})
  \emph{A function $f^S : {\cal R} \times (0,\infty) \to \mathbb{R}$ is a smoothing function of a continuous function $f : \cal R \to \mathbb{R}$ if and only if $f^S(\cdot,\mu)$ is continuously differentiable for all $\mu \in (0,\infty)$ and, for any $\omega \in \cal R$, one has
  \begin{equation*}
    \lim_{\bar\omega \to \omega, \mu \searrow 0} f^S(\bar\omega,\mu) = f(\omega).
  \end{equation*}}
\end{definition}

Fortunately, the nonsmoothness of \eqref{semisupervised} can be smoothed in a straightforward manner to obtain a smoothing function.  Following \cite{chen,ChenNiuYuan13}, for the univariate function $(t)_+ = \max\{0,t\}$, a smoothing function $\phi:\mathbb{R} \times \mathbb{R} \to \mathbb{R}$ is given by either
\begin{equation}\label{eq.smoothed_max}
  \phi(t,\mu) = \begin{cases} (t)_+ & \text{if} \quad |t|\geq \tfrac{\mu}{2} \\ \tfrac{t^2}{2\mu}+\tfrac{t}{2}+\tfrac{\mu}{8} & \text{if} \quad |t|<\tfrac{\mu}{2} \end{cases}\ \ \text{or}\ \ \phi(t,\mu) = \tfrac12 (t + \sqrt{t^2 + 4\mu^2}).
\end{equation}
Correspondingly, using the fact that $|t| = (t)_+ + (-t)_+$, a smoothing function $\psi:\mathbb{R} \times \mathbb{R} \to \mathbb{R}$ for the univariate absolute value function is given by either
\begin{equation}\label{eq.smoothed_abs}
  \psi(t,\mu) = \begin{cases} |t| & \text{if} \quad |t|\geq \tfrac{\mu}{2} \\ \tfrac{t^2}{\mu}+\tfrac{\mu}{4} & \text{if} \quad |t|<\tfrac{\mu}{2} \end{cases}\ \ \text{or}\ \ \psi(t,\mu) = \sqrt{t^2 + 4\mu^2}.
\end{equation}
Other choices are possible as well, each with different theoretical and/or practical advantages and disadvantages.  For example, the former options above have the advantage that they match $(t)_+$ and $|t|$, respectively, when $|t| \geq \tfrac\mu2$, which is not the case for the latter options.  However, the latter options have the advantage of being twice continuously differentiable, which is not the case for the former options.

One can now obtain a smooth approximation of \eqref{semisupervised} using smoothing functions.  First, for all $i \in \{1,\dots,p\}$, suppose a smoothing function $f_i^S$ of $f_i$ is given by $f_i^S(w,b,\mu) = \phi(t_i,\mu)$, where $t_i \equiv 1 - y_i(w^Tx_i + b)$.  Second, for all $i \in \{p+1,\dots,p+q\}$, suppose a smoothing function $f_i^S$ of $f_i$ is given by $f_i^S(w,b,\mu) = \phi(t_i,\mu)$, where $t_i \equiv 1 - \psi(w^Tx_i + b,\mu)$.  Then, a smoothing function $F^S$ for $F$ in \eqref{semisupervised} is given by
\begin{equation}\label{smooth}
  \begin{aligned}
    F^S(w,b,\mu) = \tfrac{1}{2} \|w\|_2^2 &+ C_1 \sum_{i=1}^p \phi(1 - y_i(w^T x_i + b),\mu) \\
    &+ C_2 \sum_{i=p+1}^{p+q} \phi(1-\psi(w^T x_i + b, \mu), \mu);
  \end{aligned}
\end{equation}
see Figure~\ref{fig:smoothing} for illustration.  Here, we are making use of the fact that a smooth composite of smoothing functions is a smoothing function; see, e.g., \cite[Theorem~1]{chen}.

\begin{figure}[ht]
  \centering
  \includegraphics[width=0.33\textwidth]{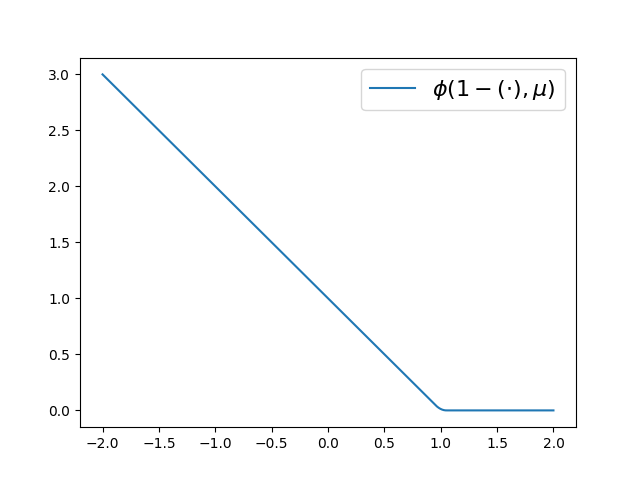}
  \includegraphics[width=0.33\textwidth]{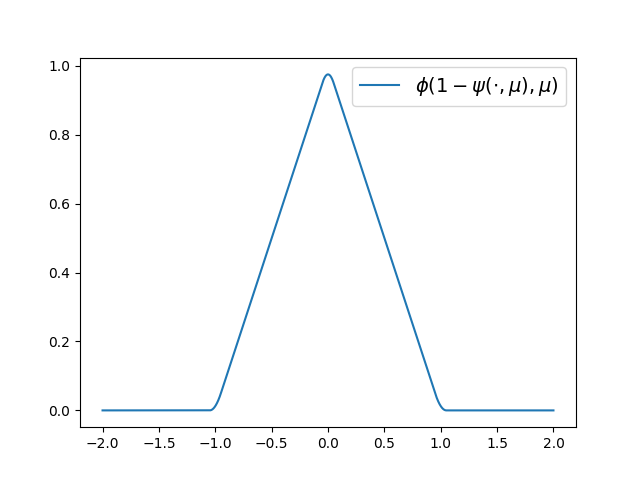}
  \caption{Illustration of $\phi(1 - (\cdot),\mu)$ (left) and $\phi(1 - \psi(\cdot,\mu),\mu)$ (right) with $\mu = 0.1$ using the former choices for the smoothing functions in \eqref{eq.smoothed_max} and \eqref{eq.smoothed_abs}.}
  \label{fig:smoothing}
\end{figure}

\subsection{Smooth Convex Local Approximation}\label{sec.convex}

The smoothing function \eqref{smooth} has the benefit of being continuously differentiable.  However, it is nonconvex, which presents certain challenges when one aims to employ optimization techniques such as a quasi-Newton strategy.  To address these challenges, one can consider---about some base point, call it $(\bar{w},\bar{b}) \in \mathbb{R}^n \times \mathbb{R}$---a convex ``local'' approximation of~$F^S$.  Thus, the question becomes: Can $F^S$ be ``convexified'' locally in a straightforward manner?  The answer is yes, since $F^S$ happens to be weakly convex.

\begin{definition}(see, e.g., \cite[Definition~4.1 and Proposition~4.3]{vial1983})
  \emph{A function $f : {\cal R} \to \mathbb{R}$ is $\rho$-convex if and only if there exists a convex function $g : {\cal R} \to \mathbb{R}$ such that $f(\omega) = g(\omega) + \rho\|x\|_2^2$.  A function that is $\rho$-convex for $\rho \in (0,\infty)$ is said to be strongly convex.  A function that is $\rho$-convex for $\rho \in (-\infty,0)$ is said to be weakly convex.}
\end{definition}

To see that $F^S$ is weakly convex, the following proposition is useful.

\begin{proposition} {\rm (see, e.g., \cite[Section~3.1]{davis2018})} \label{prop.weaklyconvex}
  If $c : {\cal R} \to \mathbb{R}$ is continuously differentiable with its gradient function being Lipschitz with constant $L_{\nabla c} \in (0,\infty)$, and if $h : \mathbb{R} \to \mathbb{R}$ is convex and Lipschitz with constant $L_h \in (0,\infty)$, then the composite function $h \circ c : {\cal R} \to \mathbb{R}$ is $\rho$-convex with $\rho = -L_hL_{\nabla c} \in (-\infty,0)$.
\end{proposition}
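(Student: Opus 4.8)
The plan is to show that $h \circ c$ is $\rho$-convex with $\rho = -L_hL_{\nabla c}$ by exhibiting a convex function $g$ such that $(h\circ c)(\omega) = g(\omega) + \rho\|\omega\|_2^2$, or equivalently (and more conveniently) by showing that $(h\circ c)(\omega) - \rho\|\omega\|_2^2 = (h\circ c)(\omega) + L_hL_{\nabla c}\|\omega\|_2^2$ is convex. Since convexity of a continuous function can be checked along line segments, I would fix $\omega_1, \omega_2 \in {\cal R}$ and $\lambda \in [0,1]$, set $\bar\omega = \lambda\omega_1 + (1-\lambda)\omega_2$, and aim to bound $(h\circ c)(\bar\omega)$ from above by $\lambda (h\circ c)(\omega_1) + (1-\lambda)(h\circ c)(\omega_2)$ plus the appropriate quadratic slack term.

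The key steps, in order, are as follows. First, use the Lipschitz continuity of $\nabla c$ to invoke the standard descent-type estimate: for any $u,v \in {\cal R}$, $\|c(u) - c(v) - \nabla c(v)^T(u-v)\| \le \tfrac{1}{2}L_{\nabla c}\|u-v\|^2$ (a coordinatewise/mean-value argument applied to each component of $c$, or the fundamental theorem of calculus applied to $t \mapsto c(v + t(u-v))$). Applying this with $(u,v) = (\omega_j, \bar\omega)$ for $j \in \{1,2\}$, and forming the convex combination, one gets that $\|\lambda c(\omega_1) + (1-\lambda)c(\omega_2) - c(\bar\omega)\|$ is at most $\tfrac{1}{2}L_{\nabla c}(\lambda\|\omega_1-\bar\omega\|^2 + (1-\lambda)\|\omega_2-\bar\omega\|^2)$, since the first-order terms cancel after taking the convex combination (as $\lambda(\omega_1 - \bar\omega) + (1-\lambda)(\omega_2-\bar\omega) = 0$). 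Second, use convexity of $h$ to write $h(c(\bar\omega)) \le h(\lambda c(\omega_1) + (1-\lambda)c(\omega_2)) + L_h\|c(\bar\omega) - (\lambda c(\omega_1) + (1-\lambda)c(\omega_2))\|$, where the error term is controlled by the Lipschitz constant $L_h$; then apply convexity of $h$ again to bound $h(\lambda c(\omega_1) + (1-\lambda)c(\omega_2)) \le \lambda h(c(\omega_1)) + (1-\lambda)h(c(\omega_2))$. Third, combine the two estimates: $(h\circ c)(\bar\omega) \le \lambda(h\circ c)(\omega_1) + (1-\lambda)(h\circ c)(\omega_2) + \tfrac{1}{2}L_hL_{\nabla c}(\lambda\|\omega_1-\bar\omega\|^2 + (1-\lambda)\|\omega_2-\bar\omega\|^2)$. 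Finally, compute the quadratic term: since $\omega_1 - \bar\omega = (1-\lambda)(\omega_1-\omega_2)$ and $\omega_2 - \bar\omega = -\lambda(\omega_1-\omega_2)$, one has $\lambda\|\omega_1-\bar\omega\|^2 + (1-\lambda)\|\omega_2-\bar\omega\|^2 = \lambda(1-\lambda)\|\omega_1-\omega_2\|^2$, and then recognize $\lambda(1-\lambda)\|\omega_1-\omega_2\|^2 = \lambda\|\omega_1\|^2 + (1-\lambda)\|\omega_2\|^2 - \|\bar\omega\|^2$, which is exactly the slack that certifies convexity of $\omega \mapsto (h\circ c)(\omega) + L_hL_{\nabla c}\|\omega\|_2^2$ (note the factor $\tfrac12$ from the descent estimate combines with the $\|\cdot\|_2^2$ normalization as used in the paper's definition of $\rho$-convexity).

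The main obstacle I expect is bookkeeping of constants rather than any conceptual difficulty: one must be careful that the descent-type inequality for vector-valued $c$ is stated with the correct constant (the $\tfrac12 L_{\nabla c}$), and that this meshes with the $\|\cdot\|_2^2$ (as opposed to $\tfrac12\|\cdot\|_2^2$) appearing in the paper's definition of $\rho$-convexity, so that the final constant comes out as $\rho = -L_hL_{\nabla c}$ and not, say, $-\tfrac12 L_hL_{\nabla c}$. A secondary subtlety is that $h$ is only assumed defined on $\mathbb{R}$ (scalar) here, so $c$ is implicitly real-valued and the norm $\|c(u)-c(v)-\nabla c(v)^T(u-v)\|$ is just an absolute value; the argument is identical but one should make sure the composition $h\circ c$ is well-posed, i.e., $c : {\cal R} \to \mathbb{R}$. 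One then concludes by noting that the midpoint/segment convexity inequality just derived, together with continuity of $h\circ c$, yields that $h\circ c + L_hL_{\nabla c}\|\cdot\|_2^2$ is convex, hence $h\circ c$ is $\rho$-convex with $\rho = -L_hL_{\nabla c} \in (-\infty,0)$.
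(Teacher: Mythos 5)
Your argument is correct. Note that the paper does not prove this proposition itself --- it is quoted from Davis and Drusvyatskiy --- and the proof there runs through the subgradient inequality (showing $h\circ c$ admits quadratic minorants $h(c(x))+\langle \nabla c(x)^T s,\,y-x\rangle-\tfrac{L_hL_{\nabla c}}{2}\|y-x\|_2^2$ with $s\in\partial h(c(x))$), whereas you work directly with the secant (Jensen-type) inequality along segments; both routes rest on the same two ingredients, the descent lemma for $c$ and the Lipschitz continuity plus convexity of $h$, and yours is arguably the more elementary since it never invokes subdifferentials. One clarification on the bookkeeping you flagged: your chain of estimates gives
\begin{equation*}
  (h\circ c)(\bar\omega)\ \leq\ \lambda (h\circ c)(\omega_1)+(1-\lambda)(h\circ c)(\omega_2)+\tfrac12 L_hL_{\nabla c}\,\lambda(1-\lambda)\|\omega_1-\omega_2\|_2^2,
\end{equation*}
which, via the identity $\lambda(1-\lambda)\|\omega_1-\omega_2\|_2^2=\lambda\|\omega_1\|_2^2+(1-\lambda)\|\omega_2\|_2^2-\|\bar\omega\|_2^2$, certifies convexity of $h\circ c+\tfrac12 L_hL_{\nabla c}\|\cdot\|_2^2$; under the paper's normalization (no factor $\tfrac12$ in the definition of $\rho$-convexity) this says $h\circ c$ is $\rho$-convex with $\rho=-\tfrac12 L_hL_{\nabla c}$, not $-L_hL_{\nabla c}$. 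This is not a gap: $\rho$-convexity for a given $\rho$ implies it for any smaller $\rho'$ (write $g+\rho\|\cdot\|_2^2=(g+(\rho-\rho')\|\cdot\|_2^2)+\rho'\|\cdot\|_2^2$ and note the first summand remains convex), so your sharper constant implies the stated one. You should simply state this monotonicity explicitly rather than asserting that the constants "combine" to give exactly $-L_hL_{\nabla c}$. Also, your first inequality in the second step uses only the Lipschitz continuity of $h$, not its convexity; the convexity is used only in the subsequent Jensen step.
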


The first two terms for $F^S$ in \eqref{smooth} are convex.  Hence, to see that $F^S$ is weakly convex, it follows from Proposition~\ref{prop.weaklyconvex} that it suffices to notice that the final term is a composite function following the conditions of the proposition.  For example, let us consider the former options in \eqref{eq.smoothed_max} and \eqref{eq.smoothed_abs}.  One finds for any $\mu \in (0,\infty)$ that $\phi(\cdot,\mu)$ is convex and Lipschitz with constant 1.  In addition, one finds for any $\mu \in (0,\infty)$ and $i \in \{p+1,\dots,p+q\}$ that $1-\psi(w^Tx_i + b,\mu)$ is continuously differentiable with a gradient that is Lipschitz continuous with constant $2(\|x_i\|_2^2 + 1)/\mu$.  Hence, given any smoothing parameter $\mu \in (0,\infty)$ and base point $(\bar w, \bar b)$, a smooth and convex local approximation of $F^S$ is given by $F^C(\cdot, \cdot, \mu, \bar w, \bar b) : \mathbb{R}^n \times \mathbb{R} \to \mathbb{R}$ defined by
\begin{multline}\label{convex_formula}
  F^C(w,b,\mu,\bar w,\bar b) = \tfrac{1}{2} \|w\|_2^2 + C_1 \sum_{i=1}^p \phi_i(1 - y_i(w^T x_i + b),\mu) \\
  + C_2 \sum_{i=p+1}^{p+q} \left(\phi(1 - \psi(w^T x_i + b, \mu), \mu) + \rho_i \left\| \begin{bmatrix} w \\ b \end{bmatrix} - \begin{bmatrix} \bar w \\ \bar b \end{bmatrix} \right\|_2^2 \right),
\end{multline}
where for all $i \in \{p+1,\dots,p+q\}$ one has chosen $\rho_i \in (0,\infty)$ with $\rho_i \geq 2(\|x_i\|_2^2 + 1)/\mu$.

Notice that it has been critical in this discussion to consider the smooth approximation $F^S(\cdot,\mu)$ for some $\mu \in (0,\infty)$, rather than $F$ in \eqref{semisupervised} directly.  After all, the function~$F$ in~\eqref{semisupervised} is not weakly convex.  Therefore, it should not be surprising that the lower bounds for $\{\rho_i\}_{i=p+1}^{p+q}$ tend to infinity as $\mu \searrow 0$, i.e., as $F^S$ tends to $F$.

\subsection{Smooth Strongly Convex Local Approximation}\label{sec.strongly_convex}

The algorithms that we propose and test in the following sections are built on the fact that the objective functions under consideration consist of a finite sum of functions.  For example, at face value, such an algorithm could consider $\tfrac12 \|w\|_2^2$ as one function and the remaining $p+q$ terms as separate functions, meaning that one can consider there being $p+q+1$ separate terms overall.  However, one disadvantage of this breakdown is that not all of these terms would be strongly convex.  This is a disadvantage since, as we shall mention in the next section, quasi-Newton methods can avoid the need to \emph{damp} or \emph{skip} updates when a function is strongly convex.  Hence, it is worthwhile also to explore an alternative to \eqref{convex_formula} in which all terms are strongly convex.

One way to derive such an alternative is to simply add a strongly convex quadratic function to all terms.  However, this turns out not to be necessary.  Instead, one can consider \emph{distributing} the $\tfrac12 \|w\|_2^2$ term into the remaining $p+q$ terms, which has a similar effect, at least in terms of the $w$ variables.  Merely adding and also distributing a term of the form $\tfrac12 \beta \|b - \bar{b}\|_2^2$, one finds that the following only differs from \eqref{convex_formula} by $\tfrac12 \beta \|b - \bar{b}\|_2^2$ and has the advantage that each of the $p+q$ terms is strongly convex:
\begin{multline}\label{strong_convex}
  F^{SC}(w, b, \mu, \bar w, \bar b) = C_1 \sum_{i=1}^p (\phi(1 - y_i(w^Tx_i + b), \mu) + \tfrac{1}{4pC_1} \|w\|_2^2 + \tfrac{\beta}{2pC_1} \|b - \bar{b}\|_2^2) \\
  + C_2 \sum_{i=p+1}^{p+q} (\phi(1 - \psi(w^T x_i + b, \mu), \mu) + \tfrac{\rho_i}{2} (\|w - \bar{w}\|_2^2 + \|b - \bar{b}\|_2^2) + \tfrac{1}{4qC_2} \|w\|_2^2),
\end{multline}
where $\beta \in (0,\infty)$ and $\rho_i \in (2(\|x_i\|_2^2 + 1)/\mu,\infty)$ for all $i \in \{p+1,\dots,p+q\}$.  (Again, we are assuming that the former options in \eqref{eq.smoothed_max} and \eqref{eq.smoothed_abs} are being used.  The use of alternatives would only potentially affect the allowable ranges for $\{\rho_i\}_{i=p+1}^{p+q}$.)

Formally, the theoretical benefits of having terms that are strongly convex are that they satisfy certain inequalities that, as we shall see in the next section, are required by the known theoretical convergence guarantees of IBFGS.

\begin{lemma}\label{lem.sc_bounds}
  If a function $f : {\cal R} \to \mathbb{R}$ is continuously differentiable over a sublevel set ${\cal L}_\gamma(f) := \{x \in {\cal R} : f(x) \leq \gamma\}$ for some $\gamma \in \mathbb{R}$, its gradient function is Lipschitz continuous with constant $L_{\nabla f}$ over ${\cal L}_\gamma(f)$, and $f$ is strongly convex over ${\cal L}_\gamma(f)$, then there exists $\mu_{\nabla f} \in (0,\infty)$ such that, for all $(\omega,\bar\omega) \in {\cal L}_\gamma(f) \times {\cal L}_\gamma(f)$, one has
  \begin{equation*}
    \mu_{\nabla f} \|\omega - \bar\omega \|_2^2 \leq (\nabla f(\omega) - \nabla f(\bar\omega))^T (\omega - \bar\omega) \leq L_{\nabla f} \| \omega - \bar\omega \|_2^2.
  \end{equation*}
\end{lemma}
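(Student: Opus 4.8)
The plan is to prove the two inequalities separately: the right-hand one follows from Lipschitz continuity of the gradient, and the left-hand one from strong convexity.

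For the upper bound, fix $(\omega,\bar\omega) \in {\cal L}_\gamma(f) \times {\cal L}_\gamma(f)$. By the Cauchy--Schwarz inequality followed by the assumed Lipschitz continuity of $\nabla f$ over ${\cal L}_\gamma(f)$ with constant $L_{\nabla f}$, one has
\[
  (\nabla f(\omega) - \nabla f(\bar\omega))^T(\omega - \bar\omega) \leq \|\nabla f(\omega) - \nabla f(\bar\omega)\|_2 \, \|\omega - \bar\omega\|_2 \leq L_{\nabla f}\|\omega - \bar\omega\|_2^2,
\]
which is the desired right-hand inequality; this step is immediate and needs no further comment.

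For the lower bound, I would unpack the hypothesis that $f$ is strongly convex over ${\cal L}_\gamma(f)$. By the definition of $\rho$-convexity, there exist a convex function $g$ and a constant $\rho \in (0,\infty)$ with $f = g + \rho\|\cdot\|_2^2$ on ${\cal L}_\gamma(f)$; since $f$ is in particular convex there, the set ${\cal L}_\gamma(f)$ is convex, so the segment joining any $\omega,\bar\omega \in {\cal L}_\gamma(f)$ lies in ${\cal L}_\gamma(f)$ and the usual first-order calculus of convex functions applies. Because $g = f - \rho\|\cdot\|_2^2$ is differentiable (both $f$ and $\|\cdot\|_2^2$ being so) and convex, its gradient is monotone, i.e., $(\nabla g(\omega) - \nabla g(\bar\omega))^T(\omega - \bar\omega) \geq 0$. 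Substituting $\nabla g(\cdot) = \nabla f(\cdot) - 2\rho(\cdot)$ and rearranging gives
\[
  (\nabla f(\omega) - \nabla f(\bar\omega))^T(\omega - \bar\omega) \geq 2\rho \, \|\omega - \bar\omega\|_2^2,
\]
so the claim holds with $\mu_{\nabla f} := 2\rho \in (0,\infty)$. Alternatively, one could invoke the first-order characterization of strong convexity at $\omega$ and at $\bar\omega$, add the two inequalities, and cancel the function values to obtain the same bound.

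Combining the two displays with $\mu_{\nabla f} = 2\rho$ finishes the argument. There is no substantive obstacle here---the statement is a standard consequence of the hypotheses---but the one place to be slightly careful is the bookkeeping across the three ``over ${\cal L}_\gamma(f)$'' qualifiers: one should confirm that ${\cal L}_\gamma(f)$ is convex (so that monotonicity of $\nabla g$, or equivalently the first-order strong-convexity inequality, is valid along the segment between $\omega$ and $\bar\omega$), and one should track the factor of two arising from the paper's normalization $f = g + \rho\|\cdot\|_2^2$ if an explicit value for $\mu_{\nabla f}$ is wanted.
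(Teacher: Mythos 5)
Your proof is correct and takes essentially the same route as the paper: the upper bound via Cauchy--Schwarz and the Lipschitz continuity of $\nabla f$, and the lower bound via the strong monotonicity of the gradient of a strongly convex function. The only difference is that the paper obtains the latter by citing Proposition~4.10 of Vial, whereas you derive it directly from the paper's definition of $\rho$-convexity (yielding $\mu_{\nabla f} = 2\rho$), which is just the standard proof of the cited fact.
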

\begin{proof}
  The former inequality follows from \cite[Proposition~4.10]{vial1983}.  The latter follows from the Lipschitz continuity of the gradient of $f$ over ${\cal L}_\gamma(f)$.
\end{proof}

\section{(Incremental) Quasi-Newton Methods}\label{IQN}

Suppose that one aims to minimize a twice-continuously differentiable objective function.  Quasi-Newton methods attempt to mimic the behavior of Newton's method for the minimization of such a function using only first-order derivative information.  This is accomplished by maintaining a matrix representing an approximation of the Hessian of the objective.  This matrix is updated iteratively using iterate and gradient displacements during the minimization algorithm, and in favorable situations the minimization algorithm can attain a superlinear rate of convergence \cite{BroDM73}.

Mathematically, a quasi-Newton strategy operates as follows.  For simplicity of notation, let us denote the objective function as $f : \mathbb{R}^{n+1} \to \mathbb{R}$ (i.e., let us update our notation so that ${\cal R} \equiv \mathbb{R}^{n+1}$ where, for our purposes later on, we replace $(w,b) \in \mathbb{R}^n \times \mathbb{R}$ in a natural manner with $\omega \equiv (w^T,b^T)^T \in \mathbb{R}^{n+1}$) and suppose that the minimization algorithm has reached iteration $k \in \mathbb{N}$ and that the current approximation is the symmetric positive definite matrix $B_k \in \mathbb{R}^{n+1} \times \mathbb{R}^{n+1}$.  It is assumed that the minimization algorithm computes a search direction $p_k$ as the minimizer of the local quadratic approximation of~$f$ at $\omega_k \in \mathbb{R}^{n+1}$ given by $\nabla f(\omega_k)^Tp + \tfrac12 p^TB_kp$, computes a step size $\alpha_k \in (0,\infty)$, then sets the next iterate as $\omega_{k+1} \gets \omega_k - \alpha_k B_k^{-1}\nabla f(\omega_k)$, as in any Newton-like scheme.  The signifying feature of a quasi-Newton strategy is the manner in which the subsequent approximation matrix $B_{k+1} \in \mathbb{R}^{n+1} \times \mathbb{R}^{n+1}$ is chosen.  In particular, in a quasi-Newton strategy, one defines the iterate and gradient displacements
\begin{equation}\label{pair}
  s_k = \omega_{k+1} - \omega_k\ \ \text{and}\ \ y_k = \nabla f(\omega_{k+1}) - \nabla f(\omega_k),
\end{equation}
respectively, then chooses $B_{k+1}$ satisfying the so-called secant equation $B_{k+1} s_k = y_k$.

The secant equation does not determine $B_{k+1}$ uniquely; there are various choices that satisfy it.  The popular and effective BFGS method makes the selection of the matrix satisfying the secant equation whose inverse minimizes the difference from the inverse of the prior approximation.  In particular, the BFGS method chooses $B_{k+1}$ as the unique solution to the optimization problem
\begin{equation*}
  \min_{B^{-1}} \|B^{-1} - B_k^{-1}\|\ \ \text{s.t.}\ \ B^{-1} = B^{-T}\ \ \text{and}\ \ B^{-1}y_k = s_k,
\end{equation*}
where the norm in the objective of this problem is a particular weighted Frobenius norm chosen such that a rank-two change in $B_k^{-1}$ yields $B_{k+1}^{-1}$ \cite{Nocedal2006}.  Applying the Sherman-Morrison-Woodbury formula to the formula for $B_{k+1}^{-1}$ in terms of $B_k^{-1}$, one obtains the following formula for $B_{k+1}$ in terms of $B_k$:
\begin{equation*}
  B_{k+1} = B_k - \frac{B_k s_k s_k^T B_k}{s_k^T B_k s_k} + \frac{y_k y_k^T}{y_k^Ts_k}.
\end{equation*}
For future reference, we remark that if the pair $(s_k,y_k)$ satisfies $s_k^Ty_k > 0$ for all $k \in \mathbb{N}$, then as long as the initial approximation $B_0$ is symmetric and positive definite (e.g., $B_0 = I$), then all elements of the sequence $\{B_k\}$ will be symmetric and positive definite as well.  Moreover, $s_k^Ty_k > 0$ is guaranteed to hold for all $k \in \mathbb{N}$ if the gradients employed in the definition of $y_k$ are computed from a strongly convex function \cite{Nocedal2006}.  

If the objective function $f$ is not smooth, but still locally Lipschitz, then while standard convergence guarantees for quasi-Newton methods no longer apply, it has been shown to be effective in practice to employ quasi-Newton methods with generalized gradients in place of gradients.  Following the developments of Clarke \cite{Clar83}, the set of generalized gradients for a locally Lipschitz function $f$ at $\omega$ is given by
\begin{equation*}
  \eth f(\omega) = \text{conv} \left( \left\{ \lim_{\bar\omega \to \omega,\ \bar\omega \in {\cal D}_f} \nabla f(\omega_k) \right\} \right),
\end{equation*}
where ${\cal D}_f \subseteq \mathbb{R}^{n+1}$ is the set of points at which $f$ is differentiable.  For many problems, at any~$\omega$, one is able to compute an (arbitrary) element from $\eth f(\omega)$, meaning that in a practical application of a quasi-Newton method one can replace $y_k$ from \eqref{pair} with
\begin{equation*}
  y_k = g_{k+1} - g_k,\ \ \text{where}\ \ (g_{k+1},g_k) \in \eth f(\omega_{k+1}) \times \eth f(\omega_k).
\end{equation*}

\subsection{I-BFGS}

Incremental algorithms, as described in Section~\ref{Literature}, attempt to exploit the finite-sum structure of an objective function by using information from only a single component function to compute a step during each iteration, cycling through the component functions using a prescribed order or strategy.  Following this idea, for an incremental quasi-Newton method for the minimization of a finite-sum objective function of the form \eqref{problem}, an idea is to store a Hessian approximation matrix for each component function individually.  Then, since new information is obtained for only a single component function in each iteration, the Hessian approximation for that component function can be updated, e.g., using BFGS, while all others are left the same \cite{Mokhtari2018}.

An I-BFGS strategy of this type can be described as follows.  For all $k \in \mathbb{N}$ and $i \in \{1,\dots,m\}$, let $z_{k,i} \in \mathbb{R}^{n+1}$ denote the latest point at which a generalized gradient has been computed for component function~$i$ up to iteration~$k$, let $v_{k,i} \in \eth f_i(z_{k,i})$ denote this latest generalized gradient for component function~$i$ up to iteration~$k$, and let $B_{k,i} \in \mathbb{R}^{n+1 \times n+1}$ denote the Hessian approximation for component function~$i$ in iteration~$k$.  With respect to these quantities, the algorithm initializes $z_{0,i} \gets \omega_0$, $v_{0,i} \in \eth f_i(z_{0,i})$, and $B_{0,i} \succ 0$ and for all $i \in \{1,\dots,m\}$.  During iteration~$k$, a new iterate is computed using information available at the start of iteration~$k$, then the algorithm chooses an index $i_k \in \{1,\dots,m\}$ corresponding to which the Hessian approximation will be updated.  In particular, the algorithm sets $z_{k+1,i_k} \gets \omega_{k+1}$, $v_{k+1,i_k} \in \eth f_{i_k}(z_{k+1,i_k})$, $s_{k,i_k} \gets z_{k+1,i_k} - z_{k,i_k}$, and $y_{k,i_k} \gets v_{k+1,i_k} - v_{k,i_k}$, then sets
\begin{equation}\label{BFGS}
  B_{k+1,i_k} \gets B_{k,i_k} - \frac{B_{k,i_k} s_{k,i_k} s_{k,i_k}^T B_{k,i_k}}{s_{k,i_k}^T B_{k,i_k} s_{k,i_k}} + \frac{y_{k,i_k} y_{k,i_k}^T}{y_{k,i_k}^T s_{k,i_k}},
\end{equation}
whereas, for all other indices, the points, generalized gradients, and Hessian approximations are unchanged, and the iterate and gradient displacements are set to zero.

In a straightforward application of this idea, in iteration $k \in \mathbb{N}$, one is left with the desire to compute the new iterate $\omega_{k+1}$ as the minimizer of $q_k : \mathbb{R}^{n+1} \to \mathbb{R}$ defined as
\begin{equation*}
  q_k(\omega) = \frac{1}{m} \sum_{i=1}^m \left( f_i(z_{k,i}) + v_{k,i}^T(w - z_{k,i})+ \frac{1}{2} (w - z_{k,i})^T B_{k,i} (w - z_{k,i}) \right),
\end{equation*}
the solution of which can be expressed as
\begin{equation*}
  \omega_{k+1} = \left( \sum_{i=1}^m B_{k,i} \right)^{-1} \left( \sum_{i=1}^m B_{k,i} z_{k,i} - \sum_{i=1}^m v_{k,i} \right).
\end{equation*}
However, forming $\sum_{i=1}^m B_{k,i}$ by summing the elements in $\{B_{k,i}\}_{i=1}^m$, then using the resulting sum to compute $\omega_{k+1}$ could be prohibitively expensive.  Instead, in a more computationally efficient approach, the matrix $(\Tilde{B}_k)^{-1} := (\sum_{i=1}^m B_{k,i})^{-1}$ can be updated directly.  In particular, following \cite{Mokhtari2018}, one can set
\begin{equation}\label{b_inverse1}
  (\Tilde{B}_{k+1})^{-1} \gets U_k + \frac{U_k (B_{k,i_k} s_{k,i_k}) (B_{k,i_k} s_{k,i_k})^T U_k}{s_{k,i_k}^TB_{k,i_k}s_{k,i_k} - (B_{k,i_k} s_{k,i_k})^T U_k (B_{k,i_k} s_{k,i_k})},
\end{equation}
where
\begin{equation}\label{b_inverse2}
  U_k \gets (\Tilde{B}_k)^{-1} + \frac{(\Tilde{B}_k)^{-1} y_{k,i_k} y_{k,i_k}^T (\Tilde{B}_k)^{-1}}{y_{k,i_k}^Ts_{k,i_k} + y_{k,i_k}^T(\Tilde{B}_k)^{-1} y_{k,i_k}}.
\end{equation}

We state a complete algorithm, which follows that in \cite{Mokhtari2018}, as Algorithm~\ref{alg:I-BFGS}.  We remark that this algorithm employs unit step sizes, although in an implementation one should consider potentially taking shorter step sizes to ensure convergence or at least good practical behavior, as we do in our numerical experiments.  We also note that Algorithm~\ref{alg:I-BFGS} skips the BFGS update in iteration $k \in \mathbb{N}$ unless the iterate and generalized gradient displacements are sufficiently large and a curvature condition holds.  This is a standard safeguard that is needed in nonconvex settings.

\begin{algorithm}[ht]
  \caption{I-BFGS Method}
  \label{alg:I-BFGS}
  \begin{algorithmic}[1]
    \Require $c \in (0,\infty)$
    \State choose $\omega_0 \in \mathbb{R}^{n+1}$.
	\State set $z_{0,i} \gets \omega_0$, $v_{0,i} \in \eth f_i(z_{0,i})$, and $B_{0,i} \gets I$ for all $i \in \{1,\dots,m\}$
	\State set $(\Tilde{B}_0)^{-1} \gets (\sum_{i=1}^m B_{0,i})^{-1}$, $u_0 \gets \sum_{i=1}^m B_{0,i} \omega_0$, and $g_0 = \sum_{i=1}^m v_{0,i}$
	\For{$k \in \mathbb{N}$} 
	  \State set $\omega_{k+1} \gets (\Tilde{B}_k)^{-1}(u_k-g_k)$
	  \State choose $i_k \in \{1,\dots,m\}$
	  \State set $z_{k+1,i_k} \gets \omega_{k+1}$ and $z_{k+1,i} \gets z_{k,i}$ for all $i \in \{1,\dots,m\} \setminus \{i_k\}$
	  \State set $v_{k+1,i_k} \in \eth f_{i_k}(z_{k+1,i_k})$ and $v_{k+1,i} \gets v_{k,i}$ for all $i \in \{1,\dots,m\} \setminus \{i_k\}$ \label{step.v}
	  \State set $s_{k,i} \gets z_{k+1,i} - z_{k,i}$ for all $i \in \{1,\dots,m\}$ \label{step.s}
	  \State set $y_{k,i} \gets v_{k+1,i} - v_{k,i}$ for all $i \in \{1,\dots,m\}$ \label{step.y}
	  \If {$s_{k,i_k}^T y_{k,i_k} > c \|s_{k,i_k}\|_2 \|y_{k,i_k}\|_2$, $\|s_{k,i_k}\|_2 > c$, and $\|y_{k,i_k}\|_2 > c$}
	    \State set $B_{k+1,i_k}$ by \eqref{BFGS} and $B_{k+1,i} \gets B_{k,i}$ for all $i \in \{1,\dots,m\} \setminus \{i_k\}$
		\State set $(\Tilde{B}_{k+1})^{-1}$ by \eqref{b_inverse1}--\eqref{b_inverse2}.
	  \Else
	    \State set $B_{k+1,i} \gets B_{k,i}$ for all $i \in \{1,\dots,m\}$
	    \State set $(\Tilde{B}_{k+1})^{-1} \gets (\Tilde{B}_k)^{-1}$
	  \EndIf
	  \State set $u_{k+1} \gets u_k + (B_{k+1,i_k} \omega_{k+1} - B_{k,i_k} z_{k,i_k})$
	  \State set $g_{k+1} \gets g_k + v_{k+1,i_k} - v_{k,i_k}$
	\EndFor
  \end{algorithmic}
\end{algorithm}

In the following subsections, we state our proposed modifications of I-BFGS that exploit the problem reformulations and approximations described in Section~\ref{SSL}.

\subsection{I-BFGS-DC for DC Function Reformulation}

As mentioned in Section~\ref{sec.DC}, the structure of a DC function can be exploited in the context of an optimization algorithm.  For example, in the context of an I-BFGS strategy, one can exploit the properties of a DC function in an attempt to avoid the skipping of updates.  Specifically, suppose that $f_i$ is a DC function that can be expressed as $f_i = g_i - h_i$, where $g_i$ and $h_i$ are convex functions.  If, corresponding to a pair of points $(\omega,\bar\omega)$, one finds that the displacements $s = \bar\omega - \omega$ and $y = v_{\bar\omega} - v_\omega$, where $(v_\omega,v_{\bar\omega}) \in \eth f_i(\omega) \times \eth f_i(\bar\omega)$, yield $s^Ty < 0$, then a rule such as that employed in Algorithm~\ref{alg:I-BFGS} would skip the BFGS update.  This might be avoided by replacing $y$ by an approximate generalized gradient displacement, where the approximation exploits knowledge of the DC function structure of $f_i$.

The strategy that we propose is the following.  Suppose with respect to $\omega$ one has
\begin{equation*}
  v_{g_i,\omega} \in \eth g_i(\omega),\ \ v_{h_i,\omega} \in \eth h_i(\omega),\ \ \text{and}\ \ v_\omega = v_{g_i,\omega} - v_{h_i,\omega}.
\end{equation*}
Then, instead of evaluating $v_{\bar\omega}$ in a similar manner using a generalized gradient of $h_i$ at $\bar\omega$ (i.e., a subgradient, since $h_i$ is convex), one can instead employ a generalized gradient of the affine underestimator of $h_i$ defined by $h_i(\omega) + v_{h_i,\omega}^T(\cdot)$, leading to
\begin{equation*}
  v_{g_i,\bar\omega} \in \eth g_i(\bar\omega),\ \ v_{h_i,\bar\omega} \gets v_{h_i,\omega},\ \ \text{and}\ \ v_{\bar\omega} = v_{g_i,\bar\omega} - v_{h_i,\bar\omega},
\end{equation*}
which in turn leads to the approximate displacement
\begin{equation*}
  y = v_{\bar\omega} - v_\omega = v_{g_i,\bar\omega} - v_{g_i,\omega}.
\end{equation*}
Since $g_i$ is convex, this means that $s^Ty \geq 0$, and it is more likely that $s^Ty > 0$.

The algorithm that we refer to as I-BFGS-DC is identical to I-BFGS (Algorithm~\ref{alg:I-BFGS}), except that each instance of $\eth f_i(\cdot)$ is replaced by $\eth g_i(\cdot) - \eth h_i(\cdot)$ (where ``$-$'' denotes a Minkowski difference) in all places besides the computation of $v_{k+1,i_k}$ in Step~\ref{step.v}, where it is replaced by the subroutine stated in Algorithm~\ref{alg:I-BFGS-DC}.  We include the \textbf{if} condition in the subroutine since this alternative computation of $v_{k+1,i_k}$ is not needed when~$f_i$ is known to be convex.  For example, in I-BFGS-DC employed to minimize \eqref{semisupervised}, this modification would only be employed if $i_k$ corresponds to one of the final $q$ terms; it is not needed for any of the other terms since they are convex.  In any case, after the new BFGS approximation is set, the subroutine states that the value of $v_{k+1,i_k}$ is ``reset'' (if necessary) to an ``unmodified'' value for use in later iterations.

\begin{algorithm}[ht]
  \caption{I-BFGS-DC modified of computation of $v_{k+1,i_k}$}
  \label{alg:I-BFGS-DC}
  \begin{algorithmic}
    \If {$f_{i_k}$ is nonconvex}
      \State set $v_{k+1,i_k} \in \eth g_{i_k}(z_{k+1,i_k}) - \eth h_{i_k}(z_{k,i_k})$
    \Else
      \State set $v_{k+1,i_k} \in \eth g_{i_k}(z_{k+1,i_k}) - \eth h_{i_k}(z_{k+1,i_k})$
    \EndIf
    \State {}[... all proceeds as in I-BFGS until after line 17, then...]
    \If {$f_{i_k}$ is nonconvex}
      \State set $v_{k+1,i_k} \in \eth g_{i_k}(z_{k+1,i_k}) - \eth h_{i_k}(z_{k+1,i_k})$
    \EndIf 
  \end{algorithmic}
\end{algorithm}

\subsection{I-BFGS-S for Smooth Approximation}

I-BFGS can be applied to a smooth approximation of \eqref{problem} in a straightforward manner.  In particular, for all $i \in \{1,\dots,m\}$, letting $f_i^S(\cdot,\mu_i) : \mathbb{R}^{n+1} \to \mathbb{R}$ denote a smoothing function approximation of $f_i$ given a smoothing parameter $\mu_i \in (0,\infty)$, problem~\eqref{problem} (with ${\cal R} = \mathbb{R}^{n+1}$, as has been used in this section) is approximated by
\begin{equation}\label{problem_smooth}
  \min_{\omega \in \mathbb{R}^{n+1}} \tfrac{1}{m} \sum_{i=1}^m f_i^S(\omega,\mu_i).
\end{equation}
The algorithm that we refer to as I-BFGS-S is identical to I-BFGS (Algorithm~\ref{alg:I-BFGS}) except that the algorithm initializes $\mu_{0,i} \gets \mu$ for all $i \in \{1,\dots,m\}$ for some $\mu \in (0,\infty)$, each instance of an element of $\eth f_i(\cdot)$ is replaced by $\nabla f_i(\cdot,\mu_{k,i})$, and the smoothing parameter update in Algorithm \ref{alg:I-BFGS-smoothing} is added immediately after Step~\ref{step.s}; see~\cite{chen}.

\begin{algorithm}[ht]
  \caption{I-BFGS-S addition for updating smoothing parameters}
  \label{alg:I-BFGS-smoothing}
  \begin{algorithmic}
    \Require $\kappa \in (0,\infty)$, $\sigma \in (0,1)$
    \State set $\mu_{k+1,i} \gets \mu_{k,i}$ for all $i \in \{1,\dots,m\} \setminus \{i_k\}$
    \If {$\|v_{k+1,i_k}\|_2 \geq \kappa \mu_{k,i_k}$} 
      \State set $\mu_{k+1,i_k} \gets \mu_{k,i_k}$
    \Else 
      \State set $\mu_{k+1,i_k} \gets \sigma \mu_{k,i_k}$ and replace $v_{k+1,i_k} \gets \nabla f_{i_k}^S(z_{k+1,i_k}, \mu_{k+1,i_k})$
    \EndIf
  \end{algorithmic}
\end{algorithm}

We remark that since the gradients computed in I-BFGS-S depend on the smoothing parameters, it is possible that a gradient displacement---employed in a BFGS update---involves a difference of gradients computed using two different values of the smoothing parameter.  We did not find this to be an issue in our experiments, but it is something of which one should be aware in the use of I-BFGS-S.

\subsection{I-BFGS-C for Smooth Convex Local Approximation}

Suppose that, for all $i \in \{1,\dots,m\}$, the function $f_i^S(\cdot,\mu_i)$ is a continuously differentiable and $\rho_i$-convex approximation of $f_i$.  For example, for the function $F^S$ in \eqref{smooth}, the first term is continuously differentiable and $1/2$-convex, the next $p$ terms are continuously differentiable and $0$-convex, and the remaining $q$ terms are continuously differentiable and $\rho_i$-convex for sufficiently negative $\rho_i$, as shown in Section~\ref{sec.convex}.  Then, in the context of an I-BFGS-type algorithm, rather than compute the gradient displacement for the function with index $i_k$ as in I-BFGS-S, one can attempt to avoid the need to skip an update by computing the displacement according to a smooth convex local approximation of $f_{i_k}$.  This can be done following the discussion in Section~\ref{sec.convex} with the ``base point'' always set as the last iterate at which a gradient was computed.

The algorithm that we refer to as I-BFGS-C is identical to I-BFGS (Algorithm~\ref{alg:I-BFGS}) except that the algorithm initializes $\mu_{0,i} \gets \mu$ for all $i \in \{1,\dots,m\}$ for some $\mu \in (0,\infty)$, each instance of an element of $\eth f_i(\cdot)$ is replaced by $\nabla f_i(\cdot,\mu_{k,i})$, and Step~\ref{step.y} is replaced by the subroutine stated as Algorithm~\ref{alg:I-BFGS-smoothing-convex}.  This subroutine updates the smoothing parameters using the same strategy as in I-BFGS-S, and modifies the update for the gradient displacement if the smooth and $\rho_{i_k}$-convex approximation of $f_{i_k}$ is nonconvex.

\begin{algorithm}[ht]
  \caption{I-BFGS-C additions}
  \label{alg:I-BFGS-smoothing-convex}
  \begin{algorithmic}[1]
    \Require $\kappa \in (0,\infty)$, $\sigma \in (0,1)$
    \State set $\mu_{k+1,i} \gets \mu_{k,i}$ for all $i \in \{1,\dots,m\} \setminus \{i_k\}$
	\State set $y_{k,i} \gets v_{k+1,i} - v_{k,i}$ for all $i \in \{1,\dots,m\} \setminus \{i_k\}$
    \If {$\|v_{k+1,i_k}\|_2 \geq \kappa \mu_{k,i_k}$}
	  \State set $\mu_{k+1,i_k} \gets \mu_{k,i_k}$
	\Else
	  \State set $\mu_{k+1,i_k} \gets \sigma \mu_{k,i_k}$ and replace $v_{k+1,i_k} \gets \nabla f_{i_k}^S (z_{k+1,i_k},\mu_{k+1,i_k})$
	\EndIf
	\If{$f_{i_k}^S(\cdot,\mu_{k+1,i_k})$ is convex}
	  \State set $y_{k,i_k} \gets v_{k+1,i_k} - v_{k,i_k}$
	\Else
	  \State set $\rho$ such that $f_{i_k}^S(\cdot,\mu_{k+1,i_k}) + \rho \|\cdot\|_2^2$ is convex \label{line.rho}
	  \State set $y_{k,i_k} \gets v_{k+1,i_k} - v_{k,i_k} + 2 \rho (z_{k+1,i_k} - z_{k,i_k})$
	\EndIf
  \end{algorithmic}
\end{algorithm}

\subsection{I-BFGS-SC for Smooth Strongly Convex Local Approximation}

The algorithm that we refer to as I-BFGS-SC is identical to I-BFGS-C except that, in line~\ref{line.rho} of Algorithm~\ref{alg:I-BFGS-smoothing-convex}, the value for $\rho$ is chosen such that the stated function is strongly convex.  In addition, in the particular application of I-BFGS-SC for the minimization of $F$ in \eqref{semisupervised} that we explore through numerical experimentation in the following section, we employ the distribution of terms as described in Section~\ref{sec.strongly_convex}.

In general, I-BFGS-type methods have no convergence guarantees when employed to minimize a function that is nonsmooth and nonconvex.  However, using the theoretical guarantees from \cite{Mokhtari2018}, one can establish a convergence guarantee for the minimization of an approximate objective if, after some iteration, the smoothing functions that are employed yield twice-continuously differentiable functions with Lipschitz Hessians, the smoothing parameters are no longer modified, the ``base point'' used for convexification is no longer changed, and in place of gradients of $f_i^S(\cdot,\mu_i)$ one uses gradients of the convexified terms.  In other words, one can establish a convergence guarantee in a setting in which, after some iteration, the I-BFGS strategy is employed to minimize a fixed function with strongly convex and sufficiently smooth terms.  The function being minimized in this setting is not the original nonconvex and nonsmooth objective function, but if the ``base point'' is close to a minimizer of the original objective function, then the minimizer of the employed strongly convex approximation may be close (or closer) to a minimizer of the original objective.

Under the conditions stated in the previous paragraph (and formalized in the statement of the theorem below), one can establish the following convergence guarantee for I-BFGS \cite{Mokhtari2018}, which we state here for convenience.  For simplicity of notation, we write the guarantee in terms of the minimization of $\tilde{f} = \tfrac1m \sum_{i=1}^m \tilde{f}_i$, where $\tilde{f}_i \approx f_i$ for all $i \in \{1,\dots,m\}$ such that $\{\tilde{f}_i\}_{i=1}^m$ satisfies the following assumption.  Recall that we have already established in Lemma~\ref{lem.sc_bounds} a set of conditions---including strong convexity---that ensure the first set of bounds in the assumption.  The final inequality in the assumption can be satisfied if one employs certain smoothing functions.

\begin{assumption}\label{ass.theory}
  There exist $\mu_{\nabla f} \in (0,\infty)$, $L_{\nabla f} \in (0,\infty)$, and $L_{\nabla^2 f} \in (0,\infty)$ such that, for all $i \in \{1,\dots,m\}$ and $(\omega,\bar\omega)$, one has
  \begin{equation*}
    \begin{aligned}
      \mu_{\nabla f} \|\omega - \bar\omega \|_2^2 \leq (\nabla \tilde{f}_i(\omega) - \nabla \tilde{f}_i(\bar\omega))^T (\omega - \bar\omega) &\leq L_{\nabla f} \| \omega - \bar\omega \|_2^2 \\
      \text{and}\ \ \|\nabla^2 \tilde{f}_i(\omega) - \nabla^2 \tilde{f}_i(\bar\omega)\|_2 &\leq L_{\nabla^2 f} \|\omega - \bar\omega\|_2.
    \end{aligned}
  \end{equation*}
\end{assumption}

Since $\tilde{f}$ is strongly convex, it has a unique global minimizer $\omega_*$.

\begin{lemma}\label{lemma1} (\cite[Lemma 1]{Mokhtari2018})
  Suppose that $i_k = (k\ {\rm mod}\ n) + 1$ for all $k \in \mathbb{N}$ and define $\sigma_k := \max\{\|z_{k+1,i_k} - \omega_*\|_2, \|z_{k,i_k} - \omega_*\|_2\}$ and $M_{i_k} := \nabla^2 f_{i_k} (\omega_*)^{-1/2}$ for all $k \in \mathbb{N}$.  If Assumption~\ref{ass.theory} holds and $\sigma_k < m/(3L_{\nabla^2 f})$, then
  \begin{equation*}
    \|B_{k+1,i_k} - \nabla^2 f_{i_k}(\omega_*)\|_{M_{i_k}} \leq \left((1 - \alpha \theta_k^2 )^{1/2} + \alpha_1 \sigma_k \right) \|B_{k,i} - \nabla^2 f_{i_k}(\omega_*)\|_{M_{i_k}} + \alpha_2 \sigma_k
  \end{equation*}
  for some $(\alpha, \alpha_1, \alpha_2) \in (0,\infty)^3$, where
  \begin{equation*}
    \theta_k := \begin{cases} \frac{\|M_{i_k}(B_{k,i_k} - \nabla^2 f_{i_k}(\omega_*)) s_{k,i_k}\|_2}{\|B_{k,i_k} - \nabla^2 f_{i_k}(\omega_*) \|_{M_{i_k}} \|M_{i_k}^{-1} s_{k,i}\|_2} & \text{if $B_{k,i_k} \neq \nabla^2 f_{i_k}(\omega_*)$} \\ 0 & \text{otherwise.} \end{cases}
  \end{equation*}
\end{lemma}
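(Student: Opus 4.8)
The plan is to adapt the classical Broyden--Dennis--Mor\'e (BDM) bounded-deterioration analysis of the BFGS update, carried out in the weighted Frobenius norm $\|\cdot\|_{M_{i_k}}$, together with a perturbation argument that accounts for the fact that the secant pair $(s_{k,i_k},y_{k,i_k})$ used in \eqref{BFGS} is not the ``ideal'' one. The first step is to exploit the affine covariance of the BFGS update: with $A := M_{i_k} = \nabla^2 f_{i_k}(\omega_*)^{-1/2}$, the update \eqref{BFGS} applied to $B_{k,i_k}$ with the pair $(s_{k,i_k},y_{k,i_k})$ is conjugated by $A$ into the BFGS update applied to $\bar B_k := A^T B_{k,i_k} A$ with the transformed pair $(\bar s_k,\bar y_k) := (A^{-1}s_{k,i_k},A^T y_{k,i_k})$. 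In these coordinates the target Hessian becomes the identity (since $A^T \nabla^2 f_{i_k}(\omega_*) A = I$), one has $\|B_{k,i_k}-\nabla^2 f_{i_k}(\omega_*)\|_{M_{i_k}} = \|\bar B_k - I\|_F$ and $\|B_{k+1,i_k}-\nabla^2 f_{i_k}(\omega_*)\|_{M_{i_k}} = \|\bar B_{k+1}-I\|_F$, and $\theta_k$ is precisely $\|(\bar B_k - I)\bar s_k\|_2 / (\|\bar B_k-I\|_F\,\|\bar s_k\|_2)$, an angle-type quantity lying in $[0,1]$. So it suffices to bound $\|\bar B_{k+1}-I\|_F$.

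Next I would isolate the ``ideal'' update $\hat B_{k+1}$, namely the BFGS update of $\bar B_k$ with the pair $(\bar s_k,\bar s_k)$ --- which corresponds, in the original coordinates, to replacing $y_{k,i_k}$ by $\nabla^2 f_{i_k}(\omega_*)s_{k,i_k}$. For this update the classical BDM estimate applies directly: expanding the Frobenius norm of the rank-two correction and discarding a nonnegative remainder gives $\|\hat B_{k+1}-I\|_F \le (1-\alpha\theta_k^2)^{1/2}\|\bar B_k - I\|_F$ for a universal constant $\alpha\in(0,1]$ (with the right-hand side read as $\|\bar B_k-I\|_F$ when $\theta_k=0$). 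Separately, I would quantify the inexactness: writing $y_{k,i_k} = \int_0^1 \nabla^2 f_{i_k}(z_{k,i_k}+ts_{k,i_k})\,s_{k,i_k}\,dt$, using that $\|z_{k,i_k}+ts_{k,i_k}-\omega_*\|_2 \le \sigma_k$ for all $t\in[0,1]$ (by convexity of $t\mapsto\|z_{k,i_k}+ts_{k,i_k}-\omega_*\|_2$ and the definition of $\sigma_k$) and the Lipschitz continuity of $\nabla^2 f_{i_k}$ from Assumption~\ref{ass.theory}, one obtains $\|y_{k,i_k}-\nabla^2 f_{i_k}(\omega_*)s_{k,i_k}\|_2 \le L_{\nabla^2 f}\,\sigma_k\,\|s_{k,i_k}\|_2$. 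Passing this bound through $A$ and using that the eigenvalues of $\nabla^2 f_{i_k}(\omega_*)$ lie in $[\mu_{\nabla f},L_{\nabla f}]$ (a consequence of the first set of inequalities in Assumption~\ref{ass.theory}), one gets $\|\bar y_k - \bar s_k\|_2 \le c\,\sigma_k\,\|\bar s_k\|_2$ for a constant $c$ depending only on $L_{\nabla^2 f}$, $\mu_{\nabla f}$, and $L_{\nabla f}$.

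The final step is the perturbation bound comparing $\bar B_{k+1}$ (built from $\bar y_k$) with $\hat B_{k+1}$ (built from $\bar s_k$). The smallness hypothesis $\sigma_k < m/(3L_{\nabla^2 f})$ is used here to ensure that $c\sigma_k$ is small enough that $\bar y_k^T\bar s_k$ is bounded below by a positive multiple of $\|\bar s_k\|_2^2$; this both guarantees that \eqref{BFGS} is well defined (with $B_{k+1,i_k}\succ0$) and keeps the denominators in the update away from zero. On this region the map $y\mapsto \mathrm{BFGS}(\bar B_k,\bar s_k,y)$ is Lipschitz, and a direct estimate of the difference of the rank-one terms $\bar y_k\bar y_k^T/(\bar y_k^T\bar s_k)$ and $\bar s_k\bar s_k^T/\|\bar s_k\|_2^2$ shows that its Lipschitz constant grows at most affinely in $\|\bar B_k - I\|_F$; multiplying by $\|\bar y_k-\bar s_k\|_2 \le c\sigma_k\|\bar s_k\|_2$ then gives $\|\bar B_{k+1}-\hat B_{k+1}\|_F \le \alpha_1\sigma_k\|\bar B_k-I\|_F + \alpha_2\sigma_k$ for constants $\alpha_1,\alpha_2\in(0,\infty)$. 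Combining this with the BDM estimate via the triangle inequality, and translating back through $A$, yields the claimed inequality.

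I expect this perturbation step to be the main obstacle: one must track how the inexact secant pair propagates through the nonlinear, rational BFGS formula and verify that the constants $\alpha$, $\alpha_1$, $\alpha_2$ can be chosen uniformly in $k$. This is exactly where the smallness condition on $\sigma_k$ and the two-sided spectral bounds on $\nabla^2 f_{i_k}(\omega_*)$ are indispensable, since without them the curvature product $\bar y_k^T\bar s_k$, and hence the Lipschitz constant of the update, could degenerate.
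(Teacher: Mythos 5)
This lemma is imported verbatim from \cite[Lemma 1]{Mokhtari2018}; the paper gives no proof of its own, so there is nothing internal to compare against. Your outline --- conjugating by $M_{i_k}=\nabla^2 f_{i_k}(\omega_*)^{-1/2}$ so the target Hessian becomes the identity, applying the classical Broyden--Dennis--Mor\'e bounded-deterioration estimate to the ideal secant pair, and controlling the inexactness $\|y_{k,i_k}-\nabla^2 f_{i_k}(\omega_*)s_{k,i_k}\|_2\leq L_{\nabla^2 f}\sigma_k\|s_{k,i_k}\|_2$ via the Lipschitz Hessian --- is exactly the argument underlying the cited result, so your approach is essentially the intended one. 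One small simplification: the lower bound on the curvature product $y_{k,i_k}^Ts_{k,i_k}\geq\mu_{\nabla f}\|s_{k,i_k}\|_2^2$ already follows unconditionally from the first inequality in Assumption~\ref{ass.theory}, so the smallness hypothesis on $\sigma_k$ is not needed for well-posedness of the update, only for the final combination of constants.
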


\begin{lemma}\label{lemma3} (\cite[Lemma 3]{Mokhtari2018})
  Suppose that $i_k = (k\ {\rm mod}\ n) + 1$ for all $k \in \mathbb{N}$ and define $M_i := \nabla^2 f_i(\omega_*)^{-1/2}$ for all $i \in \{1,\dots,m\}$.  If Assumption~\ref{ass.theory} holds, then, for any $r \in (0,1)$, there exist $\epsilon(r) \in (0,\infty)$ and $\delta(r) \in (0,\infty)$ such that having both $\|\omega_0 - \omega_*\|_2 < \epsilon(r)$ and $\|B_{0,i} - \nabla^2 f_i(\omega_*)\|_{M_i} < \delta(r)$ for all $i \in \{1,\dots,m\}$ implies that
  \begin{equation*}
    \|w_k - \omega_*\|_2 \leq r^{[\frac{k-1}{m}+1]+1} \|\omega_0-\omega_*\|_2.
  \end{equation*}
  In addition, the sequences $\{\|B_{k,i}\|_2\}$ and $\{\|B_{k,i}^{-1}\|_2\}$ are uniformly bounded. 
\end{lemma}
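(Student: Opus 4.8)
The plan is to follow the classical Broyden--Dennis--Mor\'e argument for quasi-Newton methods, adapted to the incremental setting, using Lemma~\ref{lemma1} as the key ``bounded deterioration'' estimate. Two quantities must be controlled simultaneously along a \emph{coupled} induction over the cycles of length $m$: the iterate error $\|\omega_k - \omega_*\|_2$ and the Hessian-approximation errors $e_{k,i} := \|B_{k,i} - \nabla^2 f_i(\omega_*)\|_{M_i}$. Given $r \in (0,1)$, I would first fix a sufficiently small $\delta \in (0,\infty)$, intended as a uniform bound on $e_{k,i}$, and then a sufficiently small $\epsilon(r) \in (0,\infty)$ and $\delta(r) \in (0,\delta)$, with the inductive hypothesis at the start of each cycle being: every iterate produced so far lies within distance $\epsilon(r)$ of $\omega_*$, the linear-rate bound holds for all earlier iterates, and $e_{k,i} \le \delta$ for all $i \in \{1,\dots,m\}$. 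Since $\tilde{f}_i$ is strongly convex (Assumption~\ref{ass.theory}), $s^Ty > 0$ always holds, so the BFGS updates are never skipped and the clean recursion underlying Lemma~\ref{lemma1} applies; moreover Lemma~\ref{lem.sc_bounds} gives $\mu_{\nabla f} I \preceq \nabla^2 f_i(\omega_*) \preceq L_{\nabla f} I$, so $\|\cdot\|_{M_i}$ and $\|\cdot\|_2$ are equivalent with constants depending only on $\mu_{\nabla f}$ and $L_{\nabla f}$.

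First I would derive a one-step bound on the iterate error. Using the update $\omega_{k+1} = (\tilde{B}_k)^{-1}(\sum_i B_{k,i} z_{k,i} - \sum_i v_{k,i})$ together with first-order optimality $\sum_i \nabla f_i(\omega_*) = 0$, write
\[
  \omega_{k+1} - \omega_* = (\tilde{B}_k)^{-1} \sum_{i=1}^m \big( B_{k,i} - G_{k,i}\big)\, (z_{k,i} - \omega_*), \quad G_{k,i} := \int_0^1 \nabla^2 f_i\big(\omega_* + \tau(z_{k,i} - \omega_*)\big)\, d\tau ,
\]
then bound $\|B_{k,i} - G_{k,i}\|_2 \le \|B_{k,i} - \nabla^2 f_i(\omega_*)\|_2 + \tfrac12 L_{\nabla^2 f}\|z_{k,i} - \omega_*\|_2$ using the Lipschitz-Hessian condition. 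By norm equivalence and $e_{k,i} \le \delta$ the first summand is ${\cal O}(\delta)$, while the second is ${\cal O}(\epsilon(r))$; also $\|(\tilde{B}_k)^{-1}\|_2$ is bounded above once the $B_{k,i}$ are positive definite with eigenvalues uniformly away from zero, which holds as soon as $\delta < \tfrac12\mu_{\nabla f}$. Because $i_k$ cycles, each $z_{k,i}$ is an iterate generated at most $m$ steps before $\omega_{k+1}$, so each $\|z_{k,i}-\omega_*\|_2$ is controlled by the iterate errors over the current window; choosing $\delta$ and $\epsilon(r)$ small enough makes the resulting contraction factor strictly below $r$. Propagated over the window, this yields the per-cycle decay $\|\omega_k - \omega_*\|_2 \le r^{[(k-1)/m + 1] + 1}\|\omega_0 - \omega_*\|_2$, and in particular $\sigma_k \to 0$ geometrically with $\sum_k \sigma_k < \infty$ and $\sum_k \sigma_k = {\cal O}(\epsilon(r))$.

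Second I would close the induction on the Hessian errors. Lemma~\ref{lemma1} applies because the inductive hypothesis forces $\sigma_k \le \epsilon(r) < m/(3L_{\nabla^2 f})$ once $\epsilon(r)$ is small enough; using only $(1 - \alpha\theta_k^2)^{1/2} \le 1$, it gives $e_{k+1,i_k} \le (1 + \alpha_1 \sigma_k) e_{k,i_k} + \alpha_2 \sigma_k$, while $e_{k+1,i} = e_{k,i}$ for $i \ne i_k$. Iterating and using $\sum_k \sigma_k < \infty$, the product $\prod_k (1 + \alpha_1 \sigma_k)$ converges, so $\sup_{k,i} e_{k,i} \le \big(\prod_k (1 + \alpha_1 \sigma_k)\big)\big(\delta_0 + \alpha_2 \sum_k \sigma_k\big)$ with $\delta_0 := \max_i e_{0,i}$. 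Requiring $\|B_{0,i} - \nabla^2 f_i(\omega_*)\|_{M_i} < \delta(r)$ with $\delta(r)$ small enough relative to $\delta$ (absorbing the convergent product and the ${\cal O}(\epsilon(r))$ sum) guarantees $e_{k,i} \le \delta$ for all $k,i$, which closes the induction and, together with $\delta < \tfrac12\mu_{\nabla f}$ and $\|\nabla^2 f_i(\omega_*)\|_2 \le L_{\nabla f}$, yields the stated uniform bounds on $\{\|B_{k,i}\|_2\}$ and $\{\|B_{k,i}^{-1}\|_2\}$.

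The main obstacle is exactly this coupling: the iterate contraction below rate $r$ needs the $e_{k,i}$ small, while the bound $e_{k,i}\le\delta$ needs $\sum_k\sigma_k$ (hence the iterates) already small. Resolving it requires ordering the choice of constants carefully --- fix $r$, then $\delta$ from the iterate-contraction requirement, then $\delta(r)$ and $\epsilon(r)$ from the Hessian-error, the window, and the $m/(3L_{\nabla^2 f})$ requirements --- and running the two inductions in lockstep over whole cycles rather than single iterations, since within a cycle a stale $z_{k,i}$ may momentarily fail a sharper bound that only holds at cycle boundaries. The bookkeeping of the lag in $\{z_{k,i}\}$ and of the floor-function exponent is the delicate part; the remaining estimates are the routine Taylor-expansion and norm-equivalence computations indicated above.
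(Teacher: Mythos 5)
Your proposal is correct in outline and follows essentially the same route as the source this lemma is quoted from: the paper itself gives no proof, citing \cite[Lemma~3]{Mokhtari2018}, and that reference proves the result exactly by the Broyden--Dennis--Mor\'e bounded-deterioration estimate (Lemma~\ref{lemma1}) coupled with an induction that simultaneously controls the iterate error via the decomposition $\omega_{k+1}-\omega_* = (\tilde B_k)^{-1}\sum_i (B_{k,i}-G_{k,i})(z_{k,i}-\omega_*)$ and the Hessian-approximation errors over cycles of length $m$. Your ordering of the constants ($r$, then the uniform bound $\delta$, then $\delta(r)$ and $\epsilon(r)$) and your handling of the lag in $\{z_{k,i}\}$ match the standard argument, so no gap to report.
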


\begin{theorem} (\cite[Theorem 7]{Mokhtari2018})
  Suppose that the conditions in Lemmas~\ref{lemma1} and \ref{lemma3} hold and define $\tilde\omega_k := argmax_{j \in \{k n,\dots,kn + n - 1\}} \{\|\omega_j - \omega_*\|_2\}$.  Then, $\{\tilde\omega_k\}$ converges to~$\omega_*$ at a superlinear rate in the sense that
  \begin{equation*}
    \lim_{k \to \infty} \frac{\|\tilde\omega_{k+1} - \omega_*\|_2}{\|\tilde\omega_k - \omega_*\|_2} = 0.
  \end{equation*}
  In addition, there exists $\{\zeta_k\}$ such that $\|w_k - \omega_*\|_2 \leq \zeta_k$ for all $k \in \mathbb{N}$, where $\{\zeta_k\}$ vanishes at a superlinear rate in the sense that
  \begin{equation*}
    \lim_{k \to \infty} \frac{\zeta_{k+1}}{\zeta_k} = 0.
  \end{equation*}
\end{theorem}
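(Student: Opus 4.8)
This statement is the incremental counterpart of the classical Dennis--Moré superlinear convergence theorem, and the plan is to obtain it by assembling Lemma~\ref{lemma1} and Lemma~\ref{lemma3}. Throughout, write $\delta_{k,i} := \|B_{k,i} - \nabla^2 f_i(\omega_*)\|_{M_i}$ for the weighted Hessian-approximation error, and recall that under the cyclic rule each component index is refreshed exactly once per cycle. \emph{Step 1 (consequences of Lemma~\ref{lemma3}).} First I would record what linear convergence buys us. Lemma~\ref{lemma3} gives $\|\omega_k - \omega_*\|_2 \to 0$ R-linearly, so $\sigma_k \to 0$ geometrically; in particular $\sum_k \sigma_k < \infty$ and, by shrinking the radii $\epsilon(r),\delta(r)$ at the outset if necessary, $\sigma_k < m/(3L_{\nabla^2 f})$ for all $k$, which is exactly the hypothesis that licenses applying Lemma~\ref{lemma1} at every iteration. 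Lemma~\ref{lemma3} also furnishes uniform bounds on $\{\|B_{k,i}\|_2\}$ and $\{\|B_{k,i}^{-1}\|_2\}$, hence a uniform bound $\bar\delta$ on $\{\delta_{k,i}\}$ and uniform invertibility of $\tilde B_k = \sum_i B_{k,i}$.

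\emph{Step 2 (incremental Dennis--Moré condition).} Next I would show $\theta_k \to 0$. Fix $i$ and look only at the iterations with $i_k = i$ (at all other iterations $\delta_{k+1,i} = \delta_{k,i}$). Feeding into Lemma~\ref{lemma1} the inequality $(1-\alpha\theta_k^2)^{1/2} \le 1 - \tfrac{\alpha}{2}\theta_k^2$ and absorbing the $\sigma_k$-terms into a summable $c_k := (\alpha_1\bar\delta + \alpha_2)\sigma_k$ yields $\delta_{k+1,i} \le \big(1 - \tfrac{\alpha}{2}\theta_k^2\big)\delta_{k,i} + c_k$ along that subsequence, hence $\tfrac{\alpha}{2}\theta_k^2\delta_{k,i} \le \delta_{k,i} - \delta_{k+1,i} + c_k$; summing and using $\delta_{k,i}\le\bar\delta$ and $\sum_k c_k < \infty$ gives $\sum_k \theta_k^2\,\delta_{k,i} < \infty$. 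If $\liminf_k \delta_{k,i} > 0$ this forces $\theta_k \to 0$; if $\liminf_k\delta_{k,i}=0$, propagating $\delta_{k+1,i}\le\delta_{k,i}+c_k$ from a subsequence on which $\delta_{k,i}\to 0$ shows $\delta_{k,i}\to 0$ outright, so $B_{k,i}\to\nabla^2 f_i(\omega_*)$. In either case, using the uniform bound on $\delta_{k,i}$ and the fact that the $M_i$ are fixed and invertible, the Dennis--Moré-type ratio $\|(B_{k,i}-\nabla^2 f_i(\omega_*))s_{k,i}\|_2/\|s_{k,i}\|_2$ vanishes along the updates of component $i$, for every $i$.

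\emph{Step 3 (from Dennis--Moré to the superlinear rate).} Finally I would convert this into the claimed rate. Since $\omega_{k+1}$ is the exact minimizer of the aggregated model $q_k$, one has $\tilde B_k(\omega_{k+1}-\omega_*) = \sum_{i=1}^m\big[B_{k,i}(z_{k,i}-\omega_*) - (\nabla f_i(z_{k,i}) - \nabla f_i(\omega_*))\big]$, where $\sum_i \nabla f_i(\omega_*) = m\nabla\tilde f(\omega_*) = 0$ has been inserted. Expanding $\nabla f_i(z_{k,i}) - \nabla f_i(\omega_*) = \nabla^2 f_i(\omega_*)(z_{k,i}-\omega_*) + O(\|z_{k,i}-\omega_*\|_2^2)$ via the Lipschitz-Hessian bound in Assumption~\ref{ass.theory} collapses the right-hand side to $\sum_i (B_{k,i}-\nabla^2 f_i(\omega_*))(z_{k,i}-\omega_*) + O(\sigma_k^2)$; dividing by the uniformly invertible $\tilde B_k$, taking norms, and bounding each term using Step~2 (after relating the offsets $z_{k,i}-\omega_*$ to the displacements $s_{k,i}$, which differ by quantities that are $O(\sigma_k)$) gives, after bookkeeping over one full cycle, $\|\tilde\omega_{k+1}-\omega_*\|_2 \le \eta_k\|\tilde\omega_k-\omega_*\|_2$ with $\eta_k \to 0$. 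This is the per-cycle superlinear rate. The majorant is then obtained by interpolation: $\zeta_k := \big(\prod_{j}\eta_j\big)\|\omega_0-\omega_*\|_2$, with the product taken over the cycles completed by iteration $k$ (the first factor coming from Lemma~\ref{lemma3}), satisfies $\|\omega_k-\omega_*\|_2 \le \zeta_k$ for all $k$ and $\zeta_{k+1}/\zeta_k \to 0$.

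The main obstacle is Step~3. Unlike in the classical BFGS analysis, the linear term of $q_k$ mixes gradients and Hessian approximations evaluated at $m$ distinct, lagged points $z_{k,i}$, so one must simultaneously control all $m$ error sequences $\{\delta_{k,i}\}_i$ and the spread $\max_i\|z_{k,i}-\omega_*\|_2$ over a window of length $m$, and then argue that the aggregated step still contracts superlinearly per cycle; constructing $\{\zeta_k\}$ so that it genuinely dominates the iterate errors \emph{within} each cycle (where $\|\omega_j-\omega_*\|_2$ may oscillate) is the delicate bookkeeping. Step~2 is the engine, but threading it through the incremental aggregation is where the real work lies; the complete argument is carried out in \cite{Mokhtari2018}.
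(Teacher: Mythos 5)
The paper does not prove this theorem itself---it only restates \cite[Theorem~7]{Mokhtari2018} and defers to that reference---and your sketch is a faithful outline of the argument given there: linear convergence from Lemma~\ref{lemma3} makes $\{\sigma_k\}$ summable, summing the recursion of Lemma~\ref{lemma1} yields the incremental Dennis--Mor\'e condition for each component, and the aggregated-step error bound converts this into the per-cycle superlinear rate. Your approach matches the cited proof; nothing to correct.
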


\section{Computational Results}\label{computational}

We implemented I-BFGS, I-BFGS-DC, I-BFGS-S, I-BFGS-C, and I-BFGS-SC as described in the previous section to minimize the semi-supervised machine learning objective function~\eqref{semisupervised} using the data sets that are summarized in Table~\ref{table: problems} below.  The data sets that are numbered 1--6 and 9--12 are from \cite{murphy1994} and \cite{chang2011}, respectively, while the data sets that are numbered 7--8 were generated as described in \cite{chapelle2005}.  We ran the experiments using the polyps cluster in the COR@L Laboratory at Lehigh University.\footnote{https://coral.ise.lehigh.edu/wiki/doku.php/info:coral}  For the data sets numbered 1--8, ten-fold cross-validation was performed to split the data into training and testing sets.  For the data sets numbered 9--11, the data was divided into training and testing sets as described in \cite{chang2011}.  For the data set numbered~12, the data was split into training and testing sets randomly, as in \cite{astorino2015}. 

\begin{table}[ht]
\caption{Data Sets}
	{\begin{tabular}{p{0.1cm}p{1.8cm}p{1cm}p{1.2cm}p{1.2cm}||p{0.1cm}p{1.8cm}p{1cm}p{1.2cm}p{1.2cm}} 
		\hline \multirow{3}{0.8 in}{$\#$} & \multirow{3}{0.8 in} {Data Set} & \multirow{3}{0.8 in}{$n$} & \multicolumn{2}{c} {$m$} & \multirow{3}{0.8 in}{ $\#$} & \multirow{3}{0.8 in}{ Data Set} & \multirow{3}{0.8 in}{$n$} & \multicolumn{2}{c} {$m$} \\
		\cline{4-5} \cline{9-10}& & & Training & Test & & & & Training & Test\\
		\hline
1 & Ionosphere & 34 & 315 & 35 & 9 & a9a     & 123 & 32562  & 16561  \\
2 & Pima       & 9  & 690 & 77 & 10 & w8a     & 300 & 49749  & 14951  \\
3 & Sonar      & 60 & 186 & 21 & 11 & ijcnn1  & 22  & 49990  & 91701  \\
4 & Diagnostic & 30 & 511 & 57 & 12 & Covtype & 54  & 464810 & 116202 \\
5 & Heart      & 13 & 243 & 27 &    &         &     &        &        \\
6 & Cancer     & 9  & 614 & 69 &    &         &     &        &        \\
7 & g50c       & 50 & 495 & 55 &    &         &     &        &        \\
8 & g10n       & 10 & 495 & 55 &    &         &     &        &       \\
\hline
	\end{tabular}}
\label{table: problems}
\end{table}

\subsection{Parameters of Algorithms}

For each data set, the selection of the values $C_1$ and $C_2$ was performed as in \cite{astorino2015}.  In particular, using the training set for each data set, a training phase was performed that considered the values $C_1=10^i$ for each $i \in \{-1,0,1,2\}$ and $C_2=C_1^{-j}$ for each $j \in \{0,1,2\}$.  Using these results, the values of $C_1$ and $C_2$ were set (separately for each data set) as those yielding the best results for the testing data set for each data set.

For all variants of I-BFGS, the iteration limit was set as $10^4$ for data sets 1--8 and $2m$ for data sets 9--12.  In all cases, the starting point was generated randomly in the interval $[-5,5]$ for each component of $(w,b)=(w_1,...,w_n,b)$ and the value $c$ was set at $10^{-8}.$   For I-BFGS-S, I-BFGS-C, and I-BFGS-SC, the parameters $\mu_{0,i}$ for all $i \in \{1,\dots,m\}$, $\sigma$, and $\kappa$ were set as $0.1$, $0.9$, and $0.5$, respectively.  Lastly, for I-BFGS-C and I-BFGS-SC, the $\rho$ values were set as described Sections~\ref{sec.convex} and \ref{sec.strongly_convex}.

For the purposes of comparison, we also implemented the bundle method, referred to as TSVM-Bundle, with the method-specific parameters as described in \cite{astorino2015,fuduli2004}.  The iteration limit and starting points were set in the same manner as for the I-BFGS methods.  The subproblems for TSVM-Bundle were solved using Gurobi \cite{gurobi}.

\subsection{Comparison of Algorithms}

Table \ref{table:results} provides the results for all algorithms applied to each problem instance in terms of average testing error.  For each data set, we consider different percentages of labeled data (indicated in the ``\% Labeled Data'' column) to show results across a wide spectrum of the balance of terms in the objective \eqref{semisupervised}.  Since ours are only prototype implementations of the methods, we do not provide detailed time comparisons of the runs.  That said, we mention in passing that the computational time for the I-BFGS methods was always significant less than the computational times for TSVM-Bundle.  For a glimpse of this fact, we note that the average times for the I-BFGS methods for data sets 1--8 was around 1.8 seconds and for data sets 9--12 was around 50 seconds, whereas for TSVM-Bundle the average times was on the order of $10^3$ seconds for data sets 1--8 and on the order of $10^5$ seconds for data sets 9--12.

To provide an easier-to-visualize summary of the performance of the algorithms, we also provide performance profiles \cite{dolan2002}.  A performance profile makes use of a performance measure $t_{p,s}$, performance ratio $r_{p,s}$, and (cumulative) fractional performance $\rho_{s}(\tau)$, where $p$ and $s$ are indices for problem and algorithm, respectively.  For our comparison, we the relevant improvement of results for the performance measurement (see e.g. \cite{ali2005,beiranvand2017,vaz2007}), specifically,
\begin{equation*}
  t_{p,s}=1+\frac{f_{p,s}-f^b_{p}}{f^w_p-f^b_p},
\end{equation*}
where $f_{p,s}$ is the objective value of problem $p$ obtained by algorithm $s$, and $f^b_p$ and $f^w_p$ are the best and the worst final objective values of problem $p$ obtained among all algorithms, respectively.  Generally speaking, a performance profile shows how well each algorithm performs relative the others.  In each profile, a plotted point $(\tau,\rho_{s}(\tau))$ says that algorithm $s$ finds solutions within $\tau$ times of the best found solution in $\rho_{s}(\tau)*100\%$ of the problems. 

The profile in Figure~\ref{figure:performance_profile} shows that the I-BFGS algorithms all outperform TSVM-Bundle in general, with I-BFGS-SC yielding the best results overall.  Since, as previously mentioned, the I-BFGS methods require significantly less CPU time, our results show overall that I-BFGS methods are preferable in this setting.


\begin{longtable}{p{0.1cm}p{1cm}p{1.4cm}p{1.4cm}p{1.4cm}p{1.4cm}p{1.4cm}p{1.4cm}}
\caption{Average testing error of I-BFGS, I-BFGS-DC, I-BFGS-S, I-BFGS-C, I-BFGS-SC and TSVM-Bundle}\\
\hline
\tiny \# & \tiny \% Labeled Data & \tiny I-BFGS & \tiny I-BFGS-DC & \tiny I-BFGS-S & \tiny I-BFGS-C & \tiny I-BFGS-SC & \tiny TSVM-Bundle \\ \hline
\endfirsthead
\tiny \# & \tiny \% Labeled Data & \tiny I-BFGS & \tiny I-BFGS-DC & \tiny I-BFGS-S & \tiny I-BFGS-C & \tiny I-BFGS-SC & \tiny TSVM-Bundle \\ \hline
\endhead
\multirow{10}{*}{1}  & 10\%  & \textbf{17.71} & 21.14          & 21.14          & 29.43          & 21.14          & 19.14          \\
                     & 20\%  & 16.85          & \textbf{14.28} & 22             & 18.29          & 16.86          & 19.71          \\
                     & 30\%  & 14.57          & \textbf{4.57}  & 13.71          & 14.86          & 15.14          & 13.42          \\
                     & 40\%  & 17.99          & \textbf{14}    & 16.28          & 15.71          & 18.29          & 16             \\
                     & 50\%  & 13.71          & 13.99          & 12.85          & \textbf{12.57} & \textbf{12.57} & 14.28          \\
                     & 60\%  & 12.57          & 12.57          & 12             & 12             & 11.71          & \textbf{11.42} \\
                     & 70\%  & \textbf{11.41} & 13.42          & 13.14          & 13.43          & 12.29          & 13.71          \\
                     & 80\%  & 11.71          & 12             & 12.57          & 12             & \textbf{11.43} & 12.57          \\
                     & 90\%  & 12.28          & 11.71          & \textbf{10.57} & 11.14          & 11.14          & 11.14          \\
                     & 100\% & \textbf{10.85} & 12.85          & 11.71          & 12.29          & 12.57          & \textbf{10.85} \\
\hline
\multirow{10}{*}{2}  & 10\%  & 26.59          & 26.84          & 27.11          & 26.2           & \textbf{23.59} & 24.89          \\
                     & 20\%  & 24.75          & 24.23          & 23.71          & 23.97          & 24.23          & \textbf{23.7}  \\
                     & 30\%  & 22.93          & \textbf{22.15} & 23.06          & 23.32          & 22.54          & 22.66          \\
                     & 40\%  & 21.75          & 22.14          & 22.02          & 22.54          & 21.64          & \textbf{21.63} \\
                     & 50\%  & 22.02          & \textbf{21.49} & 22.02          & 22.81          & 22.01          & 21.75          \\
                     & 60\%  & 20.72          & \textbf{20.45} & 21.5           & 21.63          & 21.11          & 21.63          \\
                     & 70\%  & 21.89          & 21.24          & 21.5           & \textbf{20.85} & 22.03          & 21.89          \\
                     & 80\%  & 22.15          & \textbf{20.85} & 22.68          & 22.29          & 22.02          & 21.63          \\
                     & 90\%  & 22.02          & \textbf{20.45} & 22.15          & 22.29          & 21.51          & 21.89          \\
                     & 100\% & 21.89          & 22.67          & \textbf{21.76} & 22.02          & 22.29          & 22.28          \\
\hline
\multirow{10}{*}{3}  & 10\%  & 36.26          & 40.54          & 32.92          & 22.69          & \textbf{16.74} & 17.21          \\
                     & 20\%  & 23             & \textbf{13.92} & 27.66          & 20.45          & 20.93          & 23.14          \\
                     & 30\%  & 13.16          & \textbf{8.76}  & 9.11           & 27.4           & 15.93          & 10.16          \\
                     & 40\%  & 19.52          & \textbf{8.14}  & 17.42          & 13.4           & 15.6           & 29.71          \\
                     & 50\%  & \textbf{7.21}  & 20.47          & 24.35          & 27.9           & 18.93          & 29.95          \\
                     & 60\%  & \textbf{10.07} & 18.47          & 18.78          & 11.5           & 20.31          & 20.59          \\
                     & 70\%  & 26.14          & 21.42          & \textbf{13.02} & 13.5           & 15.93          & 17.95          \\
                     & 80\%  & 16.19          & 18             & 20.47          & \textbf{15.98} & 26.52          & 24.63          \\
                     & 90\%  & 25.4           & \textbf{13.5}  & 32.02          & 17.31          & 17.1           & 23.5           \\
                     & 100\% & \textbf{21.26} & 32             & 32             & 31.6           & 29.1           & 37.38          \\
\hline
\multirow{10}{*}{4}  & 10\%  & 14.1           & \textbf{7.94}  & 14.63          & 11.46          & 12.7           & 11.83          \\
                     & 20\%  & 7.58           & 5.46           & 9.01           & 6.19           & 5.47           & \textbf{4.06}  \\
                     & 30\%  & 4.48           & 4.4            & 4.41           & 4.59           & 4.76           & \textbf{4.06}  \\
                     & 40\%  & 4.04           & 3.69           & 4.58           & 3.7            & 2.99           & \textbf{2.28}  \\
                     & 50\%  & 2.98           & 2.99           & 2.28           & 3.17           & 2.99           & \textbf{1.93}  \\
                     & 60\%  & 3.58           & 2.98           & 3.33           & 2.99           & \textbf{2.28}  & \textbf{2.28}  \\
                     & 70\%  & 2.81           & 3.33           & \textbf{2.1}   & 2.99           & 3.17           & 2.11           \\
                     & 80\%  & 3.16           & 3.34           & 3.16           & 2.29           & 2.29           & \textbf{2.1}   \\
                     & 90\%  & 2.1            & 2.45           & 2.63           & \textbf{1.76}  & 2.46           & 2.45           \\
                     & 100\% & 2.28           & 2.81           & 2.28           & 3.34           & 2.28           & \textbf{2.11}  \\
\hline
\multirow{10}{*}{5}  & 10\%  & 28.51          & \textbf{20.74} & 23.7           & 24.44          & 25.19          & 25.92          \\
                     & 20\%  & 19.62          & 19.25          & 20.37          & 22.96          & 22.22          & \textbf{18.88} \\
                     & 30\%  & 15.92          & 15.18          & 14.81          & 17.78          & \textbf{14.44} & 15.18          \\
                     & 40\%  & 16.66          & \textbf{13.33} & 16.66          & 15.19          & 15.93          & 16.29          \\
                     & 50\%  & 14.44          & \textbf{13.7}  & 15.55          & \textbf{13.7}  & 14.81          & 15.92          \\
                     & 60\%  & 15.55          & \textbf{13.7}  & 14.44          & 14.81          & 15.19          & 14.44          \\
                     & 70\%  & 14.44          & 14.07          & \textbf{13.7}  & 14.44          & 14.07          & 14.44          \\
                     & 80\%  & \textbf{12.22} & \textbf{12.22} & 14.07          & 12.96          & \textbf{12.22} & 13.33          \\
                     & 90\%  & 14.07          & 13.33          & 15.18          & \textbf{12.59} & 12.96          & 12.96          \\
                     & 100\% & \textbf{14.44} & \textbf{14.44} & 14.81          & 14.81          & 15.56          & \textbf{14.44} \\
\hline
\multirow{10}{*}{6}  & 10\%  & 3.22           & 3.66           & \textbf{2.63}  & 3.22           & 2.64           & 2.92           \\
                     & 20\%  & 4.23           & 2.92           & \textbf{2.63}  & 2.64           & 2.78           & 2.92           \\
                     & 30\%  & 3.36           & 3.21           & 3.21           & \textbf{2.63}  & 2.78           & 3.5            \\
                     & 40\%  & 3.65           & 3.21           & 3.2            & \textbf{2.2}   & 2.77           & 3.35           \\
                     & 50\%  & 3.21           & 2.77           & 3.06           & \textbf{2.2}   & 2.92           & 2.92           \\
                     & 60\%  & 3.35           & 2.77           & 2.63           & \textbf{2.34}  & 2.92           & 2.77           \\
                     & 70\%  & 3.36           & 3.49           & 2.92           & 2.63           & \textbf{2.34}  & 2.77           \\
                     & 80\%  & 2.63           & 3.06           & 2.77           & \textbf{2.48}  & 3.36           & 2.63           \\
                     & 90\%  & 3.21           & 2.92           & 2.48           & \textbf{2.05}  & 2.63           & 3.07           \\
                     & 100\% & \textbf{2.48}  & \textbf{2.48}  & 3.21           & 2.92           & 3.06           & 2.63           \\
\hline
\multirow{10}{*}{7}  & 10\%  & 30             & 23.63          & \textbf{0.54}  & 10.91          & 36.91          & 13.45          \\
                     & 20\%  & 16.18          & 29.27          & 28.18          & 19.09          & \textbf{14.91} & 28.54          \\
                     & 30\%  & 39.27          & 18             & \textbf{9.63}  & 38             & 33.45          & 39.45          \\
                     & 40\%  & 33.81          & 27.27          & 19.81          & 30             & \textbf{10}    & 23.45          \\
                     & 50\%  & 29.81          & 10             & 20.9           & 29.09          & \textbf{3.45}  & 24             \\
                     & 60\%  & 13.09          & \textbf{6.9}   & 26             & 12.36          & 24.18          & 30.54          \\
                     & 70\%  & 7.09           & \textbf{6.18}  & 8.36           & 7.45           & 6.73           & 8.72           \\
                     & 80\%  & \textbf{3.63}  & 6.72           & \textbf{3.63}  & 3.64           & 3.82           & 7.27           \\
                     & 90\%  & 4.18           & 4.36           & 4.9            & 5.82           & \textbf{4}     & 4.72           \\
                     & 100\% & 4.9            & \textbf{1.63}  & 6              & 6.18           & 6.73           & 6.72           \\
\hline
\multirow{10}{*}{8}  & 10\%  & 12.18          & 12.54          & 12             & 12.73          & \textbf{10.36} & 12.36          \\
                     & 20\%  & 6.9            & 4.72           & 5.27           & 4.91           & \textbf{2.73}  & 5.09           \\
                     & 30\%  & 4.54           & 3.45           & 4.36           & 3.45           & \textbf{2.36}  & 6.18           \\
                     & 40\%  & 3.45           & 3.09           & 3.27           & 3.27           & \textbf{2.36}  & 3.45           \\
                     & 50\%  & 2.36           & 1.45           & 2.54           & 2.18           & \textbf{1.27}  & 3.09           \\
                     & 60\%  & 1.45           & 1.81           & 1.27           & 1.45           & \textbf{0.91}  & 1.99           \\
                     & 70\%  & 1.63           & 1.63           & 1.63           & 1.82           & \textbf{1.45}  & 2              \\
                     & 80\%  & 1.81           & \textbf{1.63}  & 2.18           & 2.18           & 1.64           & 2.36           \\
                     & 90\%  & 2              & \textbf{0.54}  & 2              & 1.45           & 1.09           & 3.63           \\
                     & 100\% & 1.47           & 1.81           & 1.63           & 1.45           & \textbf{0.73}  & 2.36           \\
\hline
\multirow{10}{*}{9}  & 10\%  & 23.78          & 21.9           & 21.09          & 23.63          & 19.3           & \textbf{19.02} \\
                     & 20\%  & 20.89          & 23.63          & \textbf{17.23} & 23.54          & 23.6           & 15.86          \\
                     & 30\%  & 20.42          & 34.46          & 18.19          & \textbf{17.03} & 23.43          & 24             \\
                     & 40\%  & \textbf{17.36} & 20.59          & 20.57          & 23.63          & 17.57          & 17.72          \\
                     & 50\%  & \textbf{15.24} & 23.11          & 23.63          & 23.62          & 17.61          & 22.15          \\
                     & 60\%  & 19.93          & 17.49          & 22.28          & 23.62          & 23.63          & \textbf{15.77} \\
                     & 70\%  & 20.1           & 21.57          & \textbf{18.41} & 21.74          & 23.63          & 20.73          \\
                     & 80\%  & 18.86          & 23.09          & 17.36          & 23.63          & 20.66          & \textbf{15.48} \\
                     & 90\%  & \textbf{16.3}  & 23.58          & 23.63          & 22.12          & 17.37          & 16.32          \\
                     & 100\% & 17.64          & 23.63          & 23.63          & 23.63          & 23.44          & \textbf{15.75} \\
\hline
\multirow{10}{*}{10} & 10\%  & 8.49           & 48.53          & 12.23          & 7.24           & \textbf{6.06}  & 9.39           \\
                     & 20\%  & 8.65           & 28.61          & 3.58           & \textbf{3.04}  & \textbf{3.04}  & 6.2            \\
                     & 30\%  & 3.12           & \textbf{3.04}  & \textbf{3.04}  & \textbf{3.04}  & \textbf{3.04}  & 4.32           \\
                     & 40\%  & \textbf{2.72}  & 3.04           & 3.04           & 3.04           & 3.04           & 3.54           \\
                     & 50\%  & \textbf{3.04}  & \textbf{3.04}  & \textbf{3.04}  & \textbf{3.04}  & \textbf{3.04}  & 4.19           \\
                     & 60\%  & \textbf{3.04}  & 3.04           & \textbf{3.04}  & \textbf{3.04}  & \textbf{3.04}  & 3.63           \\
                     & 70\%  & \textbf{3.01}  & 3.04           & 3.04           & 3.04           & 3.04           & 3.5            \\
                     & 80\%  & \textbf{3.04}  & \textbf{3.04}  & \textbf{3.04}  & \textbf{3.04}  & \textbf{3.04}  & 3.58           \\
                     & 90\%  & \textbf{3.04}  & \textbf{3.04}  & \textbf{3.04}  & \textbf{3.04}  & \textbf{3.04}  & 3.34           \\
                     & 100\% & \textbf{3.03}  & 3.04           & 3.04           & 3.04           & 3.04           & 3.14           \\
\hline
\multirow{10}{*}{11} & 10\%  & \textbf{9.5}   & 9.51           & 9.51           & 9.84           & 9.51           & 9.51           \\
                     & 20\%  & 9.51           & 9.51           & 9.51           & 9.51           & 9.51           & 9.51           \\
                     & 30\%  & 9.51           & 9.51           & 9.51           & 9.51           & 9.51           & 9.51           \\
                     & 40\%  & 9.51           & 9.51           & 9.51           & 9.51           & 9.51           & 9.51           \\
                     & 50\%  & 9.51           & 9.51           & 9.51           & 9.51           & 9.51           & \textbf{9.5}   \\
                     & 60\%  & 9.49           & 9.51           & 9.51           & 9.51           & 9.51           & \textbf{9}     \\
                     & 70\%  & \textbf{9.51}  & \textbf{9.51}  & \textbf{9.51}  & \textbf{9.51}  & \textbf{9.51}  & 9.91           \\
                     & 80\%  & 9.51           & 9.51           & 9.51           & 9.51  & 9.51  & 9.51           \\
                     & 90\%  & 9.51           & 9.51           & 9.51           & 9.51           & 9.51           & \textbf{9.46}  \\
                     & 100\% & 9.51           & 9.51           & 9.51           & 9.51           & 9.51           & \textbf{9.37}  \\
\hline
\multirow{10}{*}{12} & 10\%  & 40.36          & 40.34          & 57.64          & 40.36          & \textbf{40.05} & 45.27          \\
                     & 20\%  & 40.36          & 40.36          & 40.36          & 40.36          & 39.63          & \textbf{39.56} \\
                     & 30\%  & \textbf{39.58} & 58.36          & 40.25          & 40.36          & 39.89          & 54.94          \\
                     & 40\%  & \textbf{40.27} & 40.36          & 40.36          & 40.36          & 40.36          & 50.98          \\
                     & 50\%  & \textbf{40.34} & 40.36          & 40.36          & 56.89          & 40.36          & 59.24          \\
                     & 60\%  & \textbf{40.34} & 40.36          & 40.36          & 57.81          & 40.36          & 55.41          \\
                     & 70\%  & \textbf{38.62} & 40.36          & 40.36          & 57.2           & 40.36          & 53.52          \\
                     & 80\%  & \textbf{39.87} & 40.36          & 40.36          & 40.36          & 40.36          & 59.65          \\
                     & 90\%  & 40.36          & 40.36          & 51.73          & 40.36          & \textbf{33.05} & 59.51          \\
                     & 100\% & \textbf{35.65} & 38.14          & 40.36          & 40.36          & 46.3           & 44.89   \\  
\hline
\label{table:results}
\end{longtable}

\begin{figure}[ht]
\centering
\includegraphics[width=0.7\textwidth]{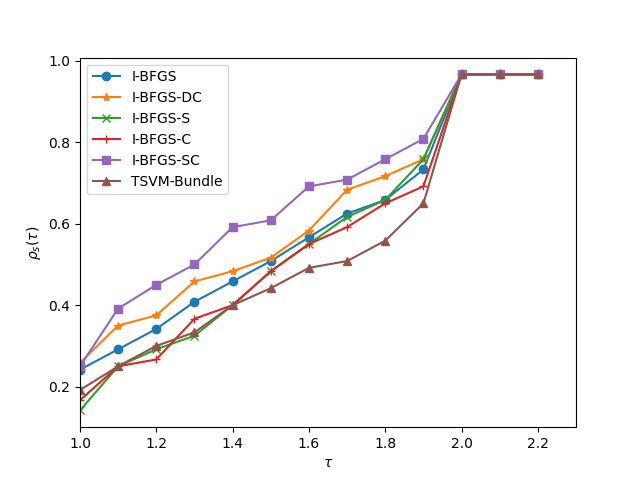}
\caption{Performance profile of algorithms.} \label{figure:performance_profile}
\end{figure}

\section{Conclusion}\label{conclusion}

In this paper, we have proposed and demonstrated the numerical performance of I-BFGS methods for the minimization of a sum of a finite number of functions that might be nonsmooth and/or nonconvex.  In particular, we focused on a problem that arises in semi-supervised machine learning.  Our results show that variants of I-BFGS that consider a difference-of-convex function formulation, smoothing, and/or (strongly) convex local approximations outperform a bundle method that has been designed for this specific problem formulation.

\section*{Disclosure statement}
The authors report there are no competing interests to declare.

\section*{Funding}
The first author is
supported by the Scientific and Technological Research Council of Türkiye (TUBITAK) with Postdoctoral Research Fellowship Program. 

\bibliographystyle{tfs}
\bibliography{ref}

\end{document}